\title[Actions on nilpotent covers]{The action of mapping classes on nilpotent covers of surfaces}
\newtheorem{thm}{Theorem}
\newtheorem{lemma}[thm]{Lemma}
\newtheorem{conje}[thm]{Conjecture}
\newtheorem{cor}[thm]{Corollary}
\newtheorem{prop}[thm]{Proposition}
\numberwithin{thm}{section}
\newcommand{\al}{\alpha}
\newcommand{\gam}{\Gamma}
\newcommand{\bZ}{\mathbb{Z}}
\newcommand{\bR}{\mathbb{R}}
\newcommand{\bC}{\mathbb{C}}
\DeclareMathOperator{\Aut}{Aut}
\DeclareMathOperator{\Out}{Out}
\DeclareMathOperator{\Mod}{Mod}
\DeclareMathOperator{\rk}{rk}
\newcommand{\bQ}{\mathbb{Q}}
\newcommand{\bH}{\mathbb{H}}
\newcommand{\whG}{\widehat{G}}
\newcommand{\mL}{\mathcal{L}}
\author[T. Koberda]{Thomas Koberda}
\address{Department of Mathematics\\ Harvard University\\ 1 Oxford St.\\ Cambridge, MA 02138 }
\email{ koberda@math.harvard.edu}
\subjclass[2010]{Primary: 57M10; Secondary: 57M27}
\keywords{Mapping class group, homology of finite covers, large $3$--manifold group}
\begin{document}
\begin{abstract}
Let $\Sigma$ be a surface whose interior admits a hyperbolic structure of finite volume.  In this paper, we show that any infinite order mapping class acts with infinite order on the homology of some universal $k$--step nilpotent cover of $\Sigma$.  We show that a Torelli mapping class either acts with infinite order on the homology of a finite abelian cover, or the suspension of the mapping class is a $3$--manifold whose fundamental group has positive homology gradient.  In the latter case, it follows that the suspended $3$--manifold has a large fundamental group.  It follows that every element of the Magnus kernel suspends to give a $3$--manifold with a large fundamental group.
\end{abstract}
\maketitle
\begin{center}
\today
\end{center}
\tableofcontents
\section{Introduction}
Let $\Sigma$ be a surface whose interior admits a hyperbolic structure of finite volume and let $p\in\Sigma$ be a marked point in the interior of $\Sigma$.  The object of study in this paper is the group $\Mod(\Sigma,p)$, the mapping class group of $\Sigma$ preserving the marked point.  Recall that this is the group of marked point preserving diffeomorphisms of $\Sigma$ up to isotopies which fix the marked point.  Thinking of $p$ as a basepoint for the fundamental group $\pi_1(\Sigma)$, we may identify $\Mod(\Sigma,p)$ with a subgroup of $\Aut(\pi_1(\Sigma))$.  In this way, we can consider the action of $\Mod(\Sigma,p)$ on characteristic subgroups, quotients, and quotients of subgroups of $\pi_1(\Sigma)$.

In \cite{K1}, the author showed that if $\psi\in\Mod(\Sigma,p)$ is any nontrivial mapping class, then $\psi$ acts nontrivially on the homology of some finite cover of $\Sigma$.  If $\{K_i\}$ is any exhausting sequence of finite index characteristic subgroups of $\pi_1(\Sigma)$, one can show that $\psi$ acts nontrivially on $K_i^{ab}$ for some $i$.  From the data of the actions of $\psi$ on the homology of finite covers of $\Sigma$, it is also possible to recover the Nielsen--Thurston classification of $\psi$.

In \cite{K1}, the action of $\psi$ on the homology of each $K_i$ might be finite order.  It seems highly unlikely that an infinite order mapping class should act with finite order on the homology of every finite cover.  From \cite{K1}, we have the following:
\begin{prop}
Let $T$ be a Dehn twist about an essential simple closed curve on $\Sigma$, or more generally a multitwist about an essential multicurve.  Then there exists a finite abelian cover $\Sigma'\to\Sigma$ such that each lift of $T$ acts with infinite order on $H_1(\Sigma',\bZ)$.
\end{prop}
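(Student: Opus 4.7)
The plan is to treat the single Dehn twist case first, splitting by whether $\gamma$ is separating, and then adapt the argument to multitwists. The nonseparating case is automatic, the separating case carries the main geometric content, and the multicurve case requires additional bookkeeping.

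If $\gamma$ is nonseparating then $[\gamma]\neq 0$ in $H_1(\Sigma,\bZ)$, and choosing a simple closed curve $c$ with $\langle c,\gamma\rangle=1$, the Picard--Lefschetz formula gives $T_\gamma^n([c])=[c]+n[\gamma]$, so $T_\gamma$ acts with infinite order already on $H_1(\Sigma,\bZ)$ itself and one may take $\Sigma'=\Sigma$. Suppose now that $\gamma$ is separating, with $\Sigma=\Sigma_A\cup_\gamma\Sigma_B$. Essentiality of $\gamma$ forces neither side to be a disk or annulus, so each contains a nonseparating simple closed curve; pick $\delta_A\subset\Sigma_A$ and $\delta_B\subset\Sigma_B$. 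Define $\phi\colon H_1(\Sigma,\bZ)\to\bZ/2\bZ$ by $\phi([\delta_A])=\phi([\delta_B])=1$, extending arbitrarily, and let $\Sigma'\to\Sigma$ be the corresponding double cover. Because $[\gamma]=0$, $\gamma$ lifts to two disjoint components $\tilde\gamma_1,\tilde\gamma_2$, while by the choice of $\delta_A,\delta_B$ both $\Sigma_A$ and $\Sigma_B$ admit connected double covers $\tilde\Sigma_A,\tilde\Sigma_B$. The component $\tilde\gamma_1$ is then nonseparating in $\Sigma'=\tilde\Sigma_A\cup\tilde\Sigma_B$ (cutting along it alone leaves $\Sigma'$ connected through $\tilde\gamma_2$ via the connected pieces $\tilde\Sigma_A,\tilde\Sigma_B$), so $[\tilde\gamma_1]\neq 0\in H_1(\Sigma',\bZ)$; since $\tilde\gamma_1,\tilde\gamma_2$ cobound $\tilde\Sigma_A$, one has $[\tilde\gamma_2]=-[\tilde\gamma_1]$.

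The canonical lift of $T_\gamma$ to $\Sigma'$ is the multitwist $T_{\tilde\gamma_1}T_{\tilde\gamma_2}$, which acts on $H_1(\Sigma',\bZ)$ by $c\mapsto c+(\langle c,\tilde\gamma_1\rangle-\langle c,\tilde\gamma_2\rangle)[\tilde\gamma_1]$; finite order of this action would force $[\tilde\gamma_1]=[\tilde\gamma_2]$ by nondegeneracy of the intersection pairing, giving $2[\tilde\gamma_1]=0$ and contradicting $[\tilde\gamma_1]\neq 0$ in the torsion-free group $H_1(\Sigma',\bZ)$. To promote this to all lifts, note that $T_\gamma$ lies in the Torelli group, so it acts trivially on the deck group $\bZ/2\bZ$; hence all lifts of $T_\gamma$ commute with deck transformations, and any other lift differs from the canonical one by a central element of order $2$, which cannot destroy the infinite order of the action on $H_1$.

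For a general essential multitwist $T=\prod T_{\gamma_i}^{a_i}$, the plan is to choose a single $\phi\colon H_1(\Sigma,\bZ)\to A$ into a finite abelian group whose restriction to each complementary region of $\bigsqcup\gamma_i$ is nontrivial, so that every $\gamma_i$ is detected by the covering geometry. The lifted action becomes $c\mapsto c+\sum_{i,j}a_i\langle c,\tilde\gamma_{i,j}\rangle[\tilde\gamma_{i,j}]$, and the main obstacle is ruling out algebraic cancellation among the various terms as $i,j$ vary. I expect this is overcome by taking $A$ of sufficiently large rank, e.g.\ $(\bZ/N)^r$ with $r$ on the order of the number of components of the multicurve, producing enough linearly independent classes $[\tilde\gamma_{i,j}]$ in $H_1(\Sigma',\bZ)$ that the lifted linear operator is nonzero and thus of infinite order.
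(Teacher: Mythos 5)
The proposition you are proving is stated in this paper but not proved here; the author cites it from his earlier paper \cite{K1}, so there is no in-text argument to compare against directly. On its own terms your single Dehn twist argument is essentially correct, but it has two gaps tied to punctured surfaces (the setting of the paper, since $\Sigma$ is only required to have finite-volume hyperbolic interior), and your multitwist case is a sketch rather than a proof.

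For a single separating $\gamma$, your core mechanism is sound: a double cover in which $\gamma$ lifts to two curves $\tilde\gamma_1,\tilde\gamma_2$ with $[\tilde\gamma_1]=-[\tilde\gamma_2]\neq 0$, so the lift $T_{\tilde\gamma_1}T_{\tilde\gamma_2}$ sends a curve $c$ dual to $\tilde\gamma_1$ to $c+2[\tilde\gamma_1]$, and the lift-commutes-with-deck-group argument is fine. The two gaps: (i) ``each side contains a nonseparating simple closed curve'' fails when a complementary piece is planar (e.g.\ a multiply-punctured disk); you should instead pick any primitive class $\delta_A\in H_1(\Sigma_A,\bZ)$, which exists because an essential separating curve cannot bound a disk or once-punctured disk, so $H_1(\Sigma_A)\neq 0$. (ii) ``Because $[\gamma]=0$'' is false for punctured surfaces: a curve separating punctures between the two sides has $[\gamma]\neq 0$ even though $T_\gamma$ is still Torelli (the transvection vanishes because $\langle c,\gamma\rangle=0$). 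You therefore must impose $\phi([\gamma])=0$ explicitly when building $\phi$, and verify that $\delta_A,\delta_B,[\gamma]$ are independent mod $2$ so this is consistent; this is doable but needs to be said. Finally, ``nondegeneracy of the intersection pairing'' is also not literally available when $\Sigma'$ has punctures; what you actually use is that $\tilde\gamma_1$ is nonseparating and hence has a dual curve, which suffices.

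The real gap is the multitwist case, which is where the content of the proposition lies. You correctly identify the obstacle: for $T=\prod T_{\gamma_i}^{a_i}$ with mixed signs (e.g.\ $T_{\gamma_1}T_{\gamma_2}^{-1}$ for disjoint homologous $\gamma_1,\gamma_2$), the transvection $N(c)=\sum_{i,j}a_i\langle c,\tilde\gamma_{i,j}\rangle[\tilde\gamma_{i,j}]$ can cancel in $H_1$ of a cover even when the individual lifts are nonseparating, because the lifts can remain pairwise homologous via lifted cobounding subsurfaces. Your proposed remedy---take a deck group of large rank to force linear independence of the $[\tilde\gamma_{i,j}]$---is not a proof, and in fact enlarging the deck group does not by itself break the homology relations $[\partial\tilde S]=0$ coming from the components $\tilde S$ of the preimage of each complementary region of the multicurve; these relations persist in every cover. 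One actually needs to engineer a specific cover in which, for some component $\gamma_i$, there is a lift $\tilde\gamma_{i,j}$ and a dual curve $c$ for which $N(c)\neq 0$; showing such a configuration always exists is exactly the nontrivial step, and your write-up acknowledges (``I expect\dots'') that you have not done it. Until that is supplied, the multitwist half of the proposition remains unproved.
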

Pseudo-Anosov homeomorphisms are much more difficult to describe, and the analogue of the previous proposition is unknown for those mapping classes:
\begin{conje}\label{c:rho}
Let $\psi$ be a mapping class which is pseudo-Anosov when restricted to some connected subsurface of $\Sigma$.  Then there exists a finite cover $\Sigma'\to\Sigma$ such that each lift of $\psi$ acts with spectral radius strictly greater than $1$ (and in particular with infinite order) on the real homology of $\Sigma'$.
\end{conje}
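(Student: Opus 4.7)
The plan is to reduce to the case where $\psi$ is pseudo-Anosov on all of $\Sigma$, then to study its mapping torus using Agol's virtual fibering and McMullen's Teichm\"uller polynomial, while being frank that the core of the conjecture appears to require a genuinely new idea.

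For the reduction, suppose $\psi$ is pseudo-Anosov on a connected subsurface $S\subset\Sigma$. After replacing $\psi$ by a power, its canonical reduction system (the boundary multicurve of $S$ together with other reducing curves) is pointwise fixed. Passing to a finite characteristic cover of $\Sigma$, a component of the preimage of $S$ becomes a subsurface $\hat S$ whose pseudo-Anosov monodromy lifts to a power of $\psi$. By a transfer argument, a finite cover of $\hat S$ on which the lift has spectral radius $>1$ on homology yields a finite cover of $\Sigma$ with the same property, so I may assume $\psi$ is globally pseudo-Anosov with dilatation $\lambda>1$.

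Now consider the mapping torus $M_\psi=\Sigma\times[0,1]/(x,1)\sim(\psi(x),0)$. By Thurston, $M_\psi$ is hyperbolic of finite volume, and by Agol it is virtually special, with a rich collection of fibered cones in $H^1$ after passing to finite covers. I would use McMullen's Teichm\"uller polynomial for a fibered face $F$ of the Thurston norm ball of $M_\psi$ containing the class dual to $\Sigma$: its specialization at this class is a polynomial whose largest root is $\lambda$, and the characteristic polynomial of $\psi_\ast$ on $H_1(\Sigma;\bR)$ divides this specialization up to cyclotomic factors. Passing to a cyclic cover $M'\to M_\psi$ corresponding to a cohomology class $\alpha$ transverse to the fiber pulls the Teichm\"uller polynomial back, and with $\alpha$ chosen carefully should strip off cyclotomic factors, exposing more of the $\lambda$-spectrum on the homology of the corresponding cover $\Sigma'\to\Sigma$.

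The main obstacle is exactly this last step: ensuring that $\lambda$ (or a power of it) is eventually realized as a genuine homological eigenvalue on some $H_1(\Sigma';\bR)$, rather than remaining hidden as a virtual invariant of the invariant train track module. This is essentially the content of the conjecture itself, and does not seem to follow formally from virtual fibering or the existence of the Teichm\"uller polynomial. A full resolution would likely require either arithmetic control of the Galois orbits of the roots of the Teichm\"uller polynomial under pullback to finite covers, or a direct geometric construction of a Perron--Frobenius eigenclass on the cohomology of a carefully chosen cover from data in the invariant foliations of $\psi$.
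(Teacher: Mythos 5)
The statement you were asked to prove is labeled \texttt{Conjecture} in the paper, and the paper does not prove it; it remains open. So there is no proof of record to compare your attempt against. Your write-up is honest on exactly this point: you reduce to the globally pseudo-Anosov case, set up the mapping torus and the Teichm\"uller-polynomial framework, and then correctly identify that the crux --- forcing the dilatation (or any algebraic unit off the circle) to show up as an actual eigenvalue on $H_1(\Sigma';\mathbb{R})$ for some finite cover, rather than remaining invisible to homology at every finite stage --- is the conjecture itself and does not follow from virtual fibering or from McMullen's machinery. That diagnosis is sound.

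Two substantive remarks on your sketch. First, the reduction to the globally pseudo-Anosov case is not as automatic as you present it: if $S\subsetneq\Sigma$, a finite cover $\hat S'\to\hat S$ on which the lifted pseudo-Anosov has homological spectral radius $>1$ does not obviously extend to a finite cover of $\Sigma$ on which the ambient $\psi$ still has spectral radius $>1$, because the inclusion-induced map on homology can kill the relevant eigenclass (boundary identifications and filling in the complement are not homology-injective in general). The paper sidesteps this by allowing ``pseudo-Anosov on a subsurface'' in the conjecture's hypothesis precisely because this reduction is delicate. Second, the paper's actual progress toward the conjecture is Theorem~\ref{t:dichotomy}: for $\psi$ in the Torelli group, either some finite abelian cover already witnesses spectral radius $>1$, or else $\pi_1(M_\psi)$ has positive homology gradient and is large. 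The mechanism there is not the Teichm\"uller polynomial but Chevalley--Weil theory plus Laurent's theorem on torsion points of subvarieties of tori (Lemma~\ref{l:puis}) and Lackenby's largeness criterion (Theorem~\ref{t:lack}); your route via virtual fibering and fibered faces is a genuinely different, and currently inconclusive, angle of attack.
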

In this paper, we shall build on the ideas of \cite{K1} and prove a non--exclusive dichotomy about mapping classes which provides evidence supporting Conjecture \ref{c:rho}.  Recall that the {\bf suspension} of a mapping class is the mapping torus of some representative of the mapping class.  It is easy to see that the homotopy type of the suspension of a homeomorphism depends only on the mapping class of the homeomorphism.  Recall that a group $G$ is called {\bf large} if there is a finite index subgroup $G'<G$ which surjects to a nonabelian free group.  One of the principal results of this paper is the following:

\begin{thm}\label{t:dichotomy}
Let $\psi\in\Mod(\Sigma,p)$ be a mapping class which acts trivially on the integral homology of $\Sigma$.  Then at least one of the following two occurs:
\begin{enumerate}
\item
There is a finite abelian cover $\Sigma'$ of $\Sigma$ such that $\psi$ acts with spectral radius strictly greater than $1$ on $H_1(\Sigma',\bR)$.  In particular $\psi$ acts with infinite order on $H_1(\Sigma',\bZ)$.
\item
The suspended $3$--manifold $M_{\psi}$ has a fundamental group with positive homology gradient.  In particular, the fundamental group of $M_{\psi}$ is large.
\end{enumerate}
\end{thm}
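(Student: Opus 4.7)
The plan is to prove the dichotomy by assuming conclusion (1) fails and deriving (2). Suppose that for every finite abelian cover $\Sigma' \to \Sigma$, the spectral radius of $\psi_*$ on $H_1(\Sigma', \bR)$ is at most $1$. My goal is to construct a cofinal tower of finite abelian covers $M_n \to M_\psi$ whose first Betti numbers grow linearly in $[M_\psi : M_n]$, yielding positive homology gradient and hence largeness by Lackenby's criterion.

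Since $\psi$ acts trivially on $H_1(\Sigma, \bZ)$, it preserves every finite characteristic abelian cover $\Sigma_n \to \Sigma$ (with deck group $G_n$ of order $d_n$) and induces the identity on $G_n$. Hence $\psi$ lifts to a canonical mapping class $\bar\psi_n$ of $\Sigma_n$ commuting with the deck transformations, and the mapping torus $M_n$ of $\bar\psi_n$ is a $d_n$-fold cover of $M_\psi$. The Wang sequence of the fibration $\Sigma_n \to M_n \to S^1$ gives
$$b_1(M_n, \bQ) = \dim_{\bQ} \ker\bigl(I - (\bar\psi_n)_*|_{H_1(\Sigma_n, \bQ)}\bigr) + 1.$$
Under the failure of (1), $(\bar\psi_n)_*$ is an integer matrix of spectral radius at most $1$, so Kronecker's theorem forces every eigenvalue to be a root of unity.

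The crux is to show that the fixed subspace grows linearly in $d_n$ along a suitable subtower. My approach is to decompose $H_1(\Sigma_n, \bC) = \bigoplus_\rho V_\rho$ into isotypic components indexed by characters $\rho$ of $G_n$; since $\psi$ commutes with deck transformations, $(\bar\psi_n)_*$ preserves each $V_\rho$. By the Chevalley-Weil formula, the nontrivial isotypic components have dimensions summing to a positive fraction of $d_n$ controlled by $|\chi(\Sigma)|$. Restricting to a tower of $p$-power abelian covers for a single prime $p$ keeps all eigenvalues in a cyclotomic extension of bounded degree, so a uniform bounded power $\psi^N$ makes the action unipotent on each $V_\rho$. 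Each isotypic component then contributes at least one vector to the fixed subspace of $I - (\bar\psi_n^N)_*$ per Jordan block, and the total should furnish a positive multiple of $d_n$. Taking the mapping tori of these lifted powers yields a tower of covers of $M_\psi$ with linearly growing first Betti numbers.

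The main obstacle is the last step: securing a uniform lower bound on the number of Jordan blocks per isotypic component while controlling the power $N$ uniformly across the tower. The worst case is that $(\bar\psi_n)_*$ is a very long single unipotent Jordan block on many $V_\rho$, which would make $\ker(I - (\bar\psi_n^N)_*) \cap V_\rho$ collapse to dimension one per component. My fallback, should this situation arise, is to exploit non-abelian finite covers of $\Sigma$ — specifically the finite nilpotent quotients that are the focus of this paper — using the principle that an infinite-order Torelli mapping class must act nontrivially somewhere in the nilpotent tower and translating that nontriviality into $b_1$ growth of the corresponding finite covers of $M_\psi$.
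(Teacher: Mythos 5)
Your overall strategy---abelian covers, the Wang sequence, Chevalley--Weil decomposition, and Lackenby's criterion---matches the paper's framework, and you correctly reduce to showing that the fixed subspace of $(\bar\psi_n)_*$ grows linearly. But there is a genuine gap in the middle, and you misdiagnose where it lies.

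The false step is the claim that restricting to $p$-power abelian covers ``keeps all eigenvalues in a cyclotomic extension of bounded degree, so a uniform bounded power $\psi^N$ makes the action unipotent.'' This is not true: the characters of $(\bZ/p^n\bZ)^k$ take values in $p^n$-th roots of unity, so the eigenvalues of $(\bar\psi_n)_*$ on the isotypic pieces $V_\rho$ can be roots of unity of order $p^n$, and this order is unbounded as $n\to\infty$. The paper explicitly isolates this scenario (``the order of these roots of unity must be unbounded'') as the hard case. Consequently there is no uniform power $N$ that unipotentizes the action across the tower, and the Jordan-block counting you worry about in the last paragraph never even gets started. The obstacle you identify (long Jordan blocks collapsing the kernel to dimension one per component) is secondary; the primary problem is that the eigenvalue $1$ simply need not appear on any $V_\rho$ at any finite stage, for any fixed power of $\psi$.

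The paper closes this gap with two ingredients you do not have. First, it packages the action on all abelian covers simultaneously as a single $(d-1)\times(d-1)$ matrix $A_\psi$ over the completed group ring (Lemma \ref{l:basis}), and then invokes Laurent's theorem on torsion points in subvarieties of tori (Lemma \ref{l:puis}) to conclude that any eigenvalue which specializes to a root of unity at every finite character must be a monomial $t_1^{\alpha_1}\cdots t_n^{\alpha_n}$, i.e.\ essentially an element of $\whG$. Second---and this is the step with no analogue in your argument---having identified the eigenvalue $\lambda$ with an element $g\in\whG$, the paper replaces $\psi$ by a conjugate by an element of $\pi_1(\Sigma)$ approximating $g$ on a given finite quotient; this conjugation does not change the homotopy type of the mapping torus $M_\psi$ but absorbs the monomial, forcing the eigenvalue $1$ to appear on every $V_\chi$ of that cover. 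Only then does the counting argument of Lemma \ref{l:one} apply, giving roughly one fixed vector per nontrivial character, i.e.\ $|G|-1$ fixed vectors, which is linear growth. Your fallback to nilpotent covers is too vague to substitute: Theorem \ref{t:inf} produces an infinite-order action on an infinitely generated module over $\bZ[N_k]$, and converting that into positive homology gradient for a tower of \emph{finite} covers of $M_\psi$ is exactly the kind of ``pro--infinite but finite at every stage'' phenomenon the paper warns about after Theorem \ref{t:inf}.
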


We will discuss the precise definition of homology gradient later in section \ref{s:large}.
The second part of the dichotomy is evidence for the following important conjecture in $3$--manifold theory:
\begin{conje}
Let $\gam$ be the fundamental group of a hyperbolic $3$-manifold.  Then $\gam$ is large.
\end{conje}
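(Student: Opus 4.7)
The plan is to reduce to the fibered case and then invoke the dichotomy of Theorem \ref{t:dichotomy}. Assuming the virtual fibering conjecture, every hyperbolic $3$--manifold $M$ admits a finite cover $M'$ which fibers over the circle, say with hyperbolic fiber $\Sigma$ and pseudo-Anosov monodromy $\psi$. Since largeness is preserved under passage to and from finite index subgroups, it suffices to prove that the fundamental group of such an $M'$ is large.

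I would next bring $\psi$ into the Torelli group of some finite cover of $\Sigma$. The action of $\psi$ on $H_1(\Sigma,\bZ/n)$ factors through a finite group, so a suitable power $\psi^k$ acts trivially on $H_1(\Sigma,\bZ/n)$. Passing first to the $k$--fold cyclic cover of $M'$ corresponding to $\psi^k$, and then to the further cover induced by the characteristic subgroup of $\pi_1(\Sigma)$ cut out by the mod--$n$ action, yields a finite cover $M''$ of $M$ whose monodromy acts trivially on the integral homology of its fiber.

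Theorem \ref{t:dichotomy} then applies to this lifted monodromy. In case (2), $M''$ has fundamental group with positive homology gradient and hence is large, whence so is $\gam$. In case (1), one obtains a finite abelian cover of the fiber on which the monodromy acts with spectral radius strictly greater than $1$. I would iterate this construction: because the spectral radius persists under compatible further covers of the fiber, one builds a tower of finite covers of $M''$ whose first Betti numbers grow at least exponentially in the degree, again yielding positive homology gradient along the tower. Combined with results of Lackenby relating exponential homology growth and the failure of property $\tau$ to virtual surjection onto a nonabelian free group, this promotes case (1) to largeness as well.

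The obstacles are twofold. The reduction to fibered $3$--manifolds relies on the virtual fibering conjecture, a deep problem external to this paper. More intrinsic to the present approach is the handling of case (1): the spectral growth in Theorem \ref{t:dichotomy} lives on the fiber, whereas largeness and homology gradient are properties of the $3$--manifold. Converting the fiber-level spectral data into genuine first Betti number growth for a tower of covers of $M''$ will be the main technical hurdle, as it requires controlling how eigenvectors at eigenvalues of modulus greater than $1$ interact with the cyclic action coming from the suspension, so that they survive in $H_1$ of the total space rather than being killed by the mapping torus construction.
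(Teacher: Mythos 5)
There is a fundamental mismatch here: the statement you are proving is an open conjecture, and the paper does not prove it. Theorem \ref{t:dichotomy} and Corollary \ref{c:magnus} are offered only as \emph{evidence}: they give largeness for suspensions of certain Torelli mapping classes (those in case (2), e.g.\ all of the Magnus kernel), not for arbitrary hyperbolic $3$--manifolds. Your proposal inherits this, and the two reductions you make to force the hypotheses of Theorem \ref{t:dichotomy} both have genuine gaps. First, the reduction to the fibered case invokes the virtual fibering conjecture, which is external to the paper and not available as an ingredient here. Second, and more seriously, you cannot in general arrange for the monodromy to become Torelli by taking powers and finite covers: a power $\psi^k$ kills the action on $H_1(\Sigma,\bZ/n)$ because $\mathrm{Sp}(2g,\bZ/n)$ is finite, but a pseudo-Anosov may act on $H_1(\Sigma,\bZ)$ with infinite order (even with spectral radius greater than $1$), so no power lies in the Torelli group; and even when it does, the lift of $\psi^k$ to a characteristic cover $\Sigma'$ need not act trivially on $H_1(\Sigma',\bZ)$ --- indeed the whole point of the paper is that such lifted actions are typically nontrivial. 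The Torelli group has infinite index, so the hypothesis of Theorem \ref{t:dichotomy} is simply not reachable by this kind of finite manipulation.

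The third gap is the one you yourself flag as the ``main technical hurdle,'' and it is not a hurdle but a dead end as stated: in case (1), an eigenvalue of modulus greater than $1$ for the action on $H_1(\Sigma',\bR)$ contributes nothing to the first Betti number of the corresponding cover of the mapping torus. As the paper recalls, $H_1$ of a cover of $M_\psi$ built from a $\psi$--compatible cover of the fiber is $\bZ$ plus the \emph{coinvariants} of the monodromy action, and coinvariants come only from the eigenvalue--$1$ (after passing to powers, root--of--unity) part of the spectrum; the expanding eigenvectors are killed by the relation $x\sim\psi_*x$ in the mapping torus. So iterating case (1) produces no homology gradient and no failure of property $(\tau)$ via Lemma \ref{l:tau}, and Lackenby's criterion (Theorem \ref{t:lack}) cannot be invoked. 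This is precisely why the paper's result is a non--exclusive dichotomy rather than a proof of largeness for all fibered hyperbolic $3$--manifolds: case (1) is the case in which the present methods say nothing about largeness of $\pi_1(M_\psi)$.
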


Theorem \ref{t:dichotomy} provides many examples of fibered hyperbolic $3$--manifolds with large fundamental groups.  Indeed, by a famous result of Thurston we have that the suspension of any pseudo-Anosov homeomorphism admits a hyperbolic structure.  On the other hand, it is a well--known fact that any non--central normal subgroup of the mapping class group contains pseudo-Anosov homeomorphisms in all of its cosets.  Finally, it is possible to show that there are nontrivial mapping classes which act trivially on the homology of each finite abelian cover of the base surface.

As for the fundamental groups of fibered $3$--manifolds, it is known that these groups are always large when the monodromy map has finite order or reducible, the latter being a consequence of the work of E. Hamilton in \cite{H}.

Recall that the kernel of the homology representation of the mapping class group is called the {\bf Torelli group}.  The Torelli group acts on the homology of the universal abelian cover of the surface $\Sigma$, where it acts by $\bZ[H_1(\Sigma,\bZ)]$--module automorphisms.  The kernel of this action is called the {\bf Magnus kernel}.  It is not clear a priori that the Magnus kernel is nontrivial.  It turns out that it is actually infinitely generated by a theorem of Church and Farb in \cite{CF}.

\begin{cor}\label{c:magnus}
Let $\psi$ be a mapping class in the Magnus kernel.  Then $\pi_1(M_{\psi})$ is large.
\end{cor}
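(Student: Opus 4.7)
The plan is to deduce the corollary directly from Theorem~\ref{t:dichotomy} by showing that alternative (1) of the dichotomy is incompatible with $\psi$ lying in the Magnus kernel. Since the Magnus kernel is a subgroup of the Torelli group, $\psi$ acts trivially on $H_1(\Sigma,\bZ)$, so the dichotomy applies, and the job reduces to verifying that for every finite abelian cover $\Sigma'\to\Sigma$, the action of $\psi$ on $H_1(\Sigma',\bR)$ has spectral radius equal to $1$, so that alternative (1) cannot occur and alternative (2) must.

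\noindent\textbf{Main calculation.} Write $\Gamma=\pi_1(\Sigma)$, $\Gamma''=[\Gamma,\Gamma]$, and let $\Gamma'=\pi_1(\Sigma')$ for a finite abelian cover $\Sigma'\to\Sigma$, so that $\Gamma''\subseteq\Gamma'\subseteq\Gamma$. Since $\psi$ is Torelli it preserves $\Gamma'$ setwise, and as an automorphism of $\Gamma$ it also preserves $\Gamma''$ and $[\Gamma',\Gamma']$, so it induces a well-defined action on $H_1(\Sigma',\bZ)=\Gamma'/[\Gamma',\Gamma']$. I would then consider the short exact sequence of $\psi$-modules
\[
1\to \Gamma''/[\Gamma',\Gamma']\to \Gamma'/[\Gamma',\Gamma']\to \Gamma'/\Gamma''\to 1.
\]
On the quotient $\Gamma'/\Gamma''$, which is a subgroup of $H_1(\Sigma,\bZ)$, the map $\psi$ acts trivially because it is Torelli. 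On the submodule $\Gamma''/[\Gamma',\Gamma']$, which is a quotient of $\Gamma''/[\Gamma'',\Gamma'']=H_1(\widetilde{\Sigma}^{ab},\bZ)$, the map $\psi$ also acts trivially, since by hypothesis it lies in the Magnus kernel and hence acts trivially on $H_1(\widetilde{\Sigma}^{ab},\bZ)$.

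\noindent\textbf{Finishing up.} These two trivial actions imply that $\psi-I$ lands inside $\Gamma''/[\Gamma',\Gamma']$ and vanishes there, so $(\psi-I)^2=0$ on $H_1(\Sigma',\bZ)$. Thus $\psi$ acts unipotently on $H_1(\Sigma',\bR)$ and its spectral radius is exactly $1$, for every finite abelian cover $\Sigma'$. This rules out alternative (1) of Theorem~\ref{t:dichotomy}, so alternative (2) must hold: $\pi_1(M_\psi)$ has positive homology gradient, and in particular is large.

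\noindent\textbf{Anticipated obstacle.} The one delicate point is purely bookkeeping: one has to be sure that $\psi$ really acts on $H_1(\Sigma',\bZ)$ (rather than just on the homology of the maximal abelian cover) and that the short exact sequence above is preserved by this action. Both follow from the Torelli hypothesis, which forces $\psi$ to preserve every finite abelian cover setwise, so no additional deep input is required once the module-theoretic picture above is in place.
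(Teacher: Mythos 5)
Your proof is correct and takes a genuinely different route from the paper's. The paper proves the corollary directly, before giving the full proof of Theorem~\ref{t:dichotomy}: it first establishes a sharper preparatory lemma --- that a Magnus-kernel element acts \emph{trivially}, not merely unipotently, on $H_1(\Sigma',\bZ)$ for every finite abelian cover $\Sigma'$ --- by observing that the metabelian extension $1\to H_1(\Sigma',\bZ)\to M\to A\to 1$ attached to the cover is a quotient of the universal metabelian quotient $\pi_1(\Sigma)/[[\pi_1(\Sigma),\pi_1(\Sigma)],[\pi_1(\Sigma),\pi_1(\Sigma)]]$, on which $\psi$ is trivial by definition of the Magnus kernel. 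It then exhibits an explicit tower of $p$-power abelian covers, computes the $\bmod\ p$ homology ranks via an Euler-characteristic count, and applies Lemma~\ref{l:tau} and Lackenby's criterion (Theorem~\ref{t:lack}) directly. You instead invoke Theorem~\ref{t:dichotomy} as a black box and rule out alternative (1) by establishing the weaker fact that $(\psi-I)^2=0$ on $H_1(\Sigma',\bZ)$ via the exact sequence $1\to\Gamma''/[\Gamma',\Gamma']\to\Gamma'/[\Gamma',\Gamma']\to\Gamma'/\Gamma''\to1$, which indeed forces spectral radius $1$. Both arguments are valid, and the paper itself remarks that the corollary follows immediately from Theorem~\ref{t:dichotomy}; the paper's direct proof has the advantage of being self-contained (not depending on the Puiseux-series and specialization machinery used later to prove the dichotomy), and its metabelian-quotient lemma is strictly stronger than your unipotence conclusion, but your short-exact-sequence computation is a clean alternative that suffices for the reduction.
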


In order to prove Theorem \ref{t:dichotomy}, we will need to study actions of mapping classes on various infinite covers.  To state the relevant result properly, we recall the so--called {\bf universal $k$--step nilpotent quotients} of $\pi_1(\Sigma)$.  We write $\gamma_1(\Sigma)=\pi_1(\Sigma)$ and \[\gamma_{i+1}(\Sigma)=[\gamma_i(\Sigma),\pi_1(\Sigma)].\]  The sequence of subgroups $\{\gamma_i(\Sigma)\}$ is called the {\bf lower central series} of $\pi_1(\Sigma)$.  The quotient $N_k=\pi_1(\Sigma)/\gamma_{k+1}(\Sigma)$ is called the universal $k$--step nilpotent quotient.  We have a map \[\Mod(\Sigma,p)\to\Aut(N_k),\] whose kernel we write as $J_k$ and call the $k$--th term of the {\bf Johnson filtration}.  Note that the residual torsion--free nilpotence of $\pi_1(\Sigma)$ implies that \[\bigcap_k J_k=\{1\}.\]  For more background on that Johnson filtration, the reader may wish to consult the work of Johnson in \cite{J}.

Write $\Sigma_k$ for the universal $k$--step nilpotent cover of $\Sigma$.  We consider the group $H_1(\Sigma_k,\bZ)$, which is an infinitely generated abelian group.  This group comes equipped with a natural action of $N_k$ by deck transformations, and thus becomes a finitely generated module over $\bZ[N_k]$.  We write this module $M_k=H_1(\Sigma_k,\bZ[N_k])$.  We have an action of $J_k$ on $M_k$ by $\bZ[N_k]$--module automorphisms.

\begin{thm}\label{t:inf}
Let $\psi\in J_k\setminus J_{k+1}$ for some $k\geq 1$.  Then $\psi$ acts with infinite order on $M_{k}$.
\end{thm}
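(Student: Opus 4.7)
The plan is to combine a ``unipotency'' property of the $\psi_*$-action on $M_k$ with a direct non-triviality computation based on the Johnson homomorphism $\tau_k(\psi)$.

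For the unipotency, I would equip $M_k=\gamma_{k+1}(\Sigma)/[\gamma_{k+1}(\Sigma),\gamma_{k+1}(\Sigma)]$ with the descending filtration $F^j$ given by the image of $\gamma_{k+j}(\Sigma)$. A key observation is that for $k\geq 1$, any $\psi\in J_k$ acts trivially on every graded quotient $L_m=\gamma_m(\Sigma)/\gamma_{m+1}(\Sigma)$: writing an iterated commutator $y=[g_1,\ldots,g_m]$ and expanding $\psi(y)=[g_1\delta_{g_1},\ldots,g_m\delta_{g_m}]$ with $\delta_{g_i}\in\gamma_{k+1}(\Sigma)$, every first-order correction has total commutator weight $m+k$ and so lies in $\gamma_{m+k}(\Sigma)\subset\gamma_{m+1}(\Sigma)$. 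Consequently $\psi_*-\mathrm{id}$ strictly increases the filtration on $M_k$. Since the LCS quotients of surface groups are torsion-free, the graded pieces $F^j/F^{j+1}$ are torsion-free, and $\bigcap_j F^j=0$ by residual nilpotence of $\pi_1(\Sigma)$. A standard argument then shows: if $\psi_*^N=\mathrm{id}$ on $M_k$ for some $N$, working modulo each $F^{j+1}$ makes $\psi_*$ a finite-order unipotent automorphism of a torsion-free abelian group, which must be the identity; intersecting yields $\psi_*=\mathrm{id}$ on $M_k$.

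To produce an element of $M_k$ not fixed by $\psi_*$, I would pick $g\in\pi_1(\Sigma)$ with $\tau_k(\psi)(g)\neq 0\in L_{k+1}$, and pick $h_1,\ldots,h_k\in\pi_1(\Sigma)$ whose images in $H_1(\Sigma)$ lie in $\ker\tau_k(\psi)$ (a cofinite subgroup of $H_1(\Sigma)$). Let $y=[g,h_1,\ldots,h_k]\in\gamma_{k+1}(\Sigma)$ be the left-normed iterated commutator. Expanding $\psi(y)$ and keeping first-order $\delta$-contributions: the correction from $\delta_g\in\gamma_{k+1}(\Sigma)$ lies in $\gamma_{2k+1}(\Sigma)$, while each correction from $\delta_{h_i}\in\gamma_{k+2}(\Sigma)$ lies in $\gamma_{2k+2}(\Sigma)$ and so vanishes modulo $\gamma_{2k+2}(\Sigma)$. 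Hence $\psi(y)y^{-1}$ represents the iterated Lie bracket $[\tau_k(\psi)(g),h_1,\ldots,h_k]$ in $L_{2k+1}(\Sigma)\cong F^{k+1}/F^{k+2}\subset M_k$, and for generic $h_i$ this is non-zero by non-degeneracy of the iterated Lie bracket in the Malcev Lie algebra of $\pi_1(\Sigma)$ (which, for finite-volume hyperbolic $\Sigma$, is either free or free modulo the single Poincar\'e relation). Thus $\psi_*([y])\neq[y]$ in $M_k$.

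Combining: if $\psi_*$ had finite order on $M_k$, the unipotency step would force $\psi_*=\mathrm{id}$, contradicting $\psi_*([y])\neq[y]$. The principal obstacle is verifying the non-degeneracy in the second step -- showing that no non-zero element of $L_{k+1}$ is annihilated by iterated Lie brackets with $k$ generic elements of $L_1=H_1(\Sigma)$, which uses the ``near-free'' structure of the associated graded Lie algebra of $\pi_1(\Sigma)$ for surfaces with negative Euler characteristic.
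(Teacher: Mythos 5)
Your approach mirrors the paper's. The paper first establishes a degree-shift lemma (Lemma~3.3) showing that $\psi\in J_k\setminus J_{k+1}$ acts with infinite order on $\gamma_i(\Sigma)/\gamma_{i+k+1}(\Sigma)$ for every $i$, and then specializes to $i=k+1$, observing that $[\gamma_{k+1},\gamma_{k+1}]\subset\gamma_{2k+2}$ so the infinite-order action descends to $M_k$. Your observation that the filtration level $F^{k+1}/F^{k+2}$ coincides with $L_{2k+1}=\gamma_{2k+1}/\gamma_{2k+2}$ (precisely because $[\gamma_{k+1},\gamma_{k+1}]\subset\gamma_{2k+2}$), combined with your filtration-unipotency step (so that $\psi_*^n([y])-[y]\equiv n\cdot(\psi_*([y])-[y])$ modulo $F^{k+2}$), is the same mechanism. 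Rephrasing through the Johnson homomorphism and the principle ``unipotent plus torsion-free graded pieces forces trivial or infinite order'' is a tidier packaging of the paper's explicit commutator induction, but it is not a genuinely different route; both hinge on the same non-degeneracy of iterated Lie brackets in the associated graded Lie algebra.

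There is, however, a concrete gap in your non-triviality step. You assert that $\ker\tau_k(\psi)$ is ``a cofinite subgroup of $H_1(\Sigma)$.'' That is false: $\tau_k(\psi):H_1(\Sigma)\to L_{k+1}$ is a homomorphism between free abelian groups of finite rank, so whenever $\tau_k(\psi)\neq 0$ its image has positive rank and its kernel has infinite index in $H_1(\Sigma)$. Your choice of $h_1,\ldots,h_k\in\ker\tau_k(\psi)$ is therefore not available in general: the kernel may have rank too small to supply $k$ classes with $[\tau_k(\psi)(g),h_1,\ldots,h_k]\neq 0$, and a priori it could even be trivial if $\tau_k(\psi)$ is injective. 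To close this, you would either need to drop the restriction $h_i\in\ker\tau_k(\psi)$ and show that the full first-order correction $[\tau_k(\psi)(g),h_1,\ldots,h_k]+\sum_i[g,h_1,\ldots,\tau_k(\psi)(h_i),\ldots,h_k]$ is nonzero in $L_{2k+1}$ for some choice of $g,h_1,\ldots,h_k$ (this is effectively what the paper's Lemma~3.3 does, and it runs squarely into the non-degeneracy you already flag as the principal obstacle), or else you would need to bound the rank of the image of $\tau_k(\psi)$ so as to guarantee $\ker\tau_k(\psi)$ is large.
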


The proof of Theorem \ref{t:inf} is relatively easy, but it does not give infinite order actions of mapping classes on the homology of intermediate finite covers.  We will see that the actions of mapping classes can be ``pro--infinite" but not infinite at any finite stage.  For instance, conjugation by an element of $N_k$ acts with infinite order on $M_k$, but the corresponding automorphism of the homology of a finite intermediate cover will always have finite order.

We note the contrast between Theorems \ref{t:dichotomy} and \ref{t:inf} and the following theorems, the first due to McMullen in \cite{Mc2} and the second due to the author in \cite{K2}:

\begin{thm}
Suppose that $\psi$ is a pseudo-Anosov homeomorphism of a surface with dilatation $K$.  Then either $K$ is the spectral radius of the action of $\psi$ on a finite cover of $\Sigma$, or there is an $0\leq\al<1$ such that \[\sup_{\Sigma'\to\Sigma}K_H(\Sigma')=\al K,\]
where this supremum is taken over all finite covers of $\Sigma$.
\end{thm}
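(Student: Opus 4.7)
The plan is to prove the dichotomy by isolating the cohomological obstruction that forces $K$ to appear as a spectral radius on some finite cover, and to show that in its absence there is a uniform gap bounded away from $K$.

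First I would set up the basic framework. For any finite cover $\pi\colon\Sigma'\to\Sigma$, the map $\psi$ admits a pseudo-Anosov lift $\tilde\psi$ (possibly after passing to a power) with the same dilatation $K$. The action of $\tilde\psi^*$ on $H_1(\Sigma',\bR)$ has spectral radius $K_H(\Sigma')\leq K$, a classical inequality derived from pairing the transverse measures of the invariant foliations against cycles. Moreover, given a tower $\Sigma''\to\Sigma'\to\Sigma$, the transfer map embeds $H_1(\Sigma',\bR)$ as a $\tilde\psi^*$-invariant subspace of $H_1(\Sigma'',\bR)$, so $K_H(\Sigma'')\geq K_H(\Sigma')$. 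The system $\{K_H(\Sigma')\}$ is thus a monotone net in $[0,K]$ whose supremum $S$ is well-defined and attainable along a cofinal sequence of covers.

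Second, I would pin down exactly when $K_H(\Sigma')=K$ can occur. The unstable foliation $\ff^u$ of $\tilde\psi$ determines a cohomology class $[\ff^u]\in H^1(\Sigma',\bR)$ that is a $\tilde\psi^*$-eigenvector with eigenvalue $K$. If $[\ff^u]\neq 0$ in some cover $\Sigma'$, then $K$ lies in the spectrum of $\tilde\psi^*$, so $K_H(\Sigma')=K$, placing us in the first alternative. Conversely, the uniqueness of the Perron eigenvector for a Perron-Frobenius type action forces any realization of $K$ as an eigenvalue to come from a nonvanishing multiple of $[\ff^u]$.

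Third, and most substantively, I would establish the uniform gap in the complementary case, where $[\ff^u]=0$ in every finite cover. The strategy is to decompose $H_1(\Sigma',\bR)$ into isotypic components under the deck group of $\Sigma'\to\Sigma$, and to analyze the spectral radius of $\tilde\psi^*$ on each piece. Twisted transfer operators, parametrized by characters of the deck group, give upper bounds that vary continuously in the character. Since the $K$-eigenvector $[\ff^u]$ is excluded by hypothesis, a compactness argument on the character torus of any finite abelian quotient yields a bound $\alpha K$ with $\alpha<1$, and a diagonalization across the directed system of covers produces the equality $S=\alpha K$.

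The main obstacle is ruling out the scenario in which $K_H(\Sigma_n)/K\to 1$ along a sequence of covers without attainment. This demands a semicontinuity or rigidity statement asserting that any limit of Perron eigenvectors on finite covers descends to a genuine cohomology class representing the unstable foliation, which was assumed to vanish everywhere. Controlling this limit appears to require passing to an infinite pro-object, such as the homology of the universal abelian cover viewed as a module over $\bZ[H_1(\Sigma,\bZ)]$, and applying a Perron-Frobenius argument in that infinite-dimensional setting to extract the gap constant $\alpha$ as an invariant of $\psi$ alone.
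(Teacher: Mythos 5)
A preliminary point: the paper contains no proof of this statement to compare yours against. It is McMullen's theorem, quoted from \cite{Mc2} purely to contrast with Theorems 1.2 and 1.5 of the paper, so your proposal can only be judged on its own terms.

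On those terms there is a genuine gap, and it sits exactly where the content of the theorem lies. Your first step is fine ($K_H(\Sigma')\leq K$ on every cover, monotonicity under transfer, so the supremum is a well-defined number in $[0,K]$), but note that the conclusion ``$\sup = \al K$ with $0\leq\al<1$'' is literally just ``$\sup < K$'', so the whole theorem amounts to: either $K$ is attained on some finite cover, or the spectral radii cannot even approach $K$. Two problems then arise. First, your attainment criterion is misstated: the unstable foliation $\ff^u$ determines a class in $H^1(\Sigma',\bR)$ only when its lift to $\Sigma'$ is transversely orientable; when it is not, the class is undefined rather than zero, so the correct dividing line is orientability of the lifted foliations on some finite cover, and the claim that an eigenvalue $K$ can only arise from such a foliation class itself requires proof --- the action on homology is not given by a nonnegative matrix, so no Perron--Frobenius uniqueness is available off the shelf. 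Second, and decisively, the uniform gap is precisely the step you defer. The character-torus compactness argument bounds the spectral radius away from $K$ only for the isotypic pieces of one fixed cover (or one fixed abelian tower); ``diagonalization across the directed system'' produces no single $\al<1$, since the per-cover gaps could shrink to zero along a sequence of covers. Your final paragraph concedes this and gestures at an infinite-dimensional Perron--Frobenius argument on $H_1$ of the universal abelian cover as a $\bZ[H_1(\Sigma,\bZ)]$-module, but no positivity structure is exhibited there and no semicontinuity or rigidity statement is formulated, let alone proved. McMullen's actual argument in \cite{Mc2} controls the homological action through the geometry of the Jacobians of the covers (Hodge-theoretic bounds), which is a good indication that the missing step is not a routine compactness argument but the heart of the matter.
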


\begin{thm}
Let $\Sigma$ be a surface of hyperbolic type and $\psi$ an mapping class of $\Sigma$.  Suppose that the supremum of the spectral radii of the action of $\psi$ on $H_1(\Sigma',\bR)$ as $\Sigma'$ ranges over finite covers of $\Sigma$ is $K$.  Then the entropy of the action of $\psi$ on any nilpotent quotient of any finite index subgroup of $\pi_1(\Sigma)$ is $\log K$.
\end{thm}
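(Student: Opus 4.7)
The plan is to reduce entropy computations on a nilpotent group to spectral radius computations on its abelianization, after which the claim follows from the definition of $K$. The key lemma I would first establish is this: for any automorphism $\phi$ of a finitely generated nilpotent group $N$, the entropy $h(\phi)$, defined as the exponential growth rate of word length under iteration, equals $\log\rho(\phi^{\mathrm{ab}})$, where $\rho$ denotes spectral radius and $\phi^{\mathrm{ab}}$ is the induced automorphism of $N^{\mathrm{ab}}\otimes\bR$.

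Granting this lemma, the theorem follows quickly. For any finite index $\psi$-invariant subgroup $\Gamma\leq\pi_1(\Sigma)$, corresponding to a cover $\Sigma'\to\Sigma$, and any nilpotent quotient $q\colon \Gamma\to N$, the abelianization $N^{\mathrm{ab}}$ is a $\psi$-equivariant quotient of $\Gamma^{\mathrm{ab}}=H_1(\Sigma',\bZ)$. Hence
\[
h(\psi|_N)=\log\rho(\psi^{\mathrm{ab}}_N)\leq\log\rho(\psi|_{H_1(\Sigma',\bR)})\leq\log K,
\]
where the first inequality uses that the characteristic polynomial on a quotient divides that on the source, and the second is by definition of $K$. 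Taking $N=\Gamma^{\mathrm{ab}}$ itself and letting $\Gamma$ range over $\psi$-invariant finite index subgroups realizes $\log K$ as the supremum of such entropies. Any finite index subgroup contains a $\psi$-invariant one of finite index (for example an intersection of finitely many $\psi$-translates, or a characteristic refinement), so the invariance hypothesis costs nothing.

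For the key lemma I would work with Mal'cev coordinates adapted to the lower central series, writing each $x\in N$ as a tuple $(x_1,\dots,x_d)$ in which $x_j$ lies at the $i(j)$-th level of the LCS, so that word length is comparable to $\max_j |x_j|^{1/i(j)}$. Since $\phi$ preserves $\{\gamma_i(N)\}$, it induces automorphisms $\phi_i$ of the graded pieces $\gamma_i(N)/\gamma_{i+1}(N)$. The iterated bracket provides a $\phi$-equivariant surjection $(N^{\mathrm{ab}}\otimes\bR)^{\otimes i}\to(\gamma_i(N)/\gamma_{i+1}(N))\otimes\bR$ intertwining $(\phi^{\mathrm{ab}})^{\otimes i}$ with $\phi_i$, whence $\rho(\phi_i)\leq\rho(\phi^{\mathrm{ab}})^i$. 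Iterating $\phi$ on a generator $g$, the level-$i$ component of $\phi^n(g)$ then grows at most like $\rho(\phi^{\mathrm{ab}})^{ni}$, contributing word length at most a constant times $\rho(\phi^{\mathrm{ab}})^n$. Maximizing over $i$ gives $h(\phi)\leq\log\rho(\phi^{\mathrm{ab}})$; the matching lower bound is immediate because the word metric on $N$ dominates that on the quotient $N^{\mathrm{ab}}$, where the spectral radius $\rho(\phi^{\mathrm{ab}})$ is by definition realized.

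The main obstacle is the key lemma, specifically the comparison $\rho(\phi_i)\leq\rho(\phi^{\mathrm{ab}})^i$ combined with the Mal'cev word-length estimate. Both statements belong to the structure theory of nilpotent groups, via viewing $N$ as a cocompact lattice in a simply connected nilpotent Lie group and exploiting the graded Lie algebra. A secondary subtlety is fixing a precise definition of entropy so that it is independent of the chosen finite generating set $S$; the convention $h(\phi)=\limsup_n\tfrac{1}{n}\log\max_{s\in S}|\phi^n(s)|_S$ suffices, and its independence from $S$ follows from the bi-Lipschitz equivalence of word metrics on a finitely generated group.
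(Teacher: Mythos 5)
This theorem is not proved in the paper at hand; it is quoted verbatim (as a ``contrast'' result) from the author's companion paper \cite{K2}, so there is no in-text proof for your attempt to be measured against. I will instead comment on the mathematical content of your proposal directly.

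Your reading of the statement (that the \emph{supremum} of the entropies over all nilpotent quotients of finite-index subgroups equals $\log K$) is the correct one --- the literal ``any'' is too strong, since the trivial quotient has entropy zero. Your reduction, via a key lemma asserting $h(\phi)=\log\rho(\phi^{\mathrm{ab}})$ for an automorphism $\phi$ of a finitely generated nilpotent group $N$, is exactly the right move: it collapses the whole theorem to the observation that abelianizations of nilpotent quotients of $\Gamma$ are quotients of $\Gamma^{\mathrm{ab}}=H_1(\Sigma',\bZ)$, after which both inequalities are immediate. This is in the spirit of what \cite{K2} is about (entropy, homology, and the polynomial structure of nilpotent groups), so the strategy is sound.

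One step that is stated too glibly is the passage from $\rho(\phi_i)\le\rho(\phi^{\mathrm{ab}})^i$ to the assertion that ``the level-$i$ component of $\phi^n(g)$ grows at most like $\rho(\phi^{\mathrm{ab}})^{ni}$.'' The induced map $d\phi$ on the Mal'cev Lie algebra is only block-lower-triangular with respect to the LCS grading, not block-diagonal, so the level-$i$ coordinate of $\phi^n(g)$ (for $g$ a generator at level $1$) is not governed by $\phi_i^n$ alone; it is a sum over paths through all levels $j\le i$, and its exponential growth rate is $\max_{j\le i}\rho(\phi_j)$, up to polynomial factors. To conclude as you do, you need both the tensor-power inequality and the fact that $\rho(\phi^{\mathrm{ab}})\ge 1$ (true, because $\phi^{\mathrm{ab}}$ is an automorphism of a lattice and hence has $\det=\pm1$), which makes $\rho(\phi^{\mathrm{ab}})^j\le\rho(\phi^{\mathrm{ab}})^i$ for $j\le i$. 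With that detail supplied, the key lemma and hence the theorem go through; the remaining points (replacing $\Gamma$ by a characteristic refinement to force $\psi$-invariance, and Guivarc'h's word-length asymptotics in Mal'cev coordinates) are standard and are invoked correctly.
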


The first of these results shows that the homology of a finite cover of $\Sigma$ may witness the nontriviality of a pseudo-Anosov homeomorphism but generally will not detect its dilatation.  The second theorem shows that there is little to be gained by replacing ``homology of a finite cover" by ``nilpotent quotient of a finite index subgroup".

\section{Acknowledgements}
The author thanks B. Farb, C. McMullen, S. Papadima, A. Reid and A. Suciu for helpful discussions.

\section{The structure of Torelli automorphisms}
In this section we make some basic observations about mapping classes which induce the identity on $H_1(\Sigma,\bZ)$.  Throughout this section, $N$ denotes a nilpotent group and $Z(N)$ denotes its center.

\begin{lemma}\label{l:trans}
Let $G$ be a residually nilpotent group and $1\neq\psi\in\Aut(G)$ an automorphism inducing the identity on $G^{ab}$.  Then there exists a nilpotent quotient $N$ of $G$ and an element $g\in N$ such that $\psi(g)=g\cdot z$, where $1\neq z\in Z(N)$.
\end{lemma}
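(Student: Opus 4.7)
The plan is to pick the minimal nilpotency class at which $\psi$ starts to act nontrivially, work modulo the next term of the lower central series, and read off the desired $g$ and $z$ from the last graded piece.

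First I would invoke residual nilpotence of $G$: since $\bigcap_i \gamma_i(G)=\{1\}$ and $\psi\neq 1$, there is some $h\in G$ with $\psi(h)h^{-1}\neq 1$, and this element cannot lie in every $\gamma_{k+1}(G)$. Hence the induced automorphism of $N_k := G/\gamma_{k+1}(G)$ is nontrivial for some $k$. Let $k$ be the smallest such index. Because $\psi$ is assumed to act as the identity on $G^{ab}=N_1$, we automatically have $k\geq 2$, and $\psi$ acts as the identity on $N_{k-1}=G/\gamma_k(G)$.

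Now I take $N=N_k$ and analyze how $\psi$ deviates from the identity on $N_k$. For every $g\in N_k$, the image of $\psi(g)g^{-1}$ under the natural surjection $N_k\twoheadrightarrow N_{k-1}$ is trivial, so $\psi(g)g^{-1}$ lies in the kernel, namely $\gamma_k(G)/\gamma_{k+1}(G)$, which I will call $A$. This subgroup $A$ is the top graded piece of the lower central series of $N_k$, and by a standard fact $[A,N_k]=\gamma_{k+1}(G)/\gamma_{k+1}(G)=\{1\}$, so $A\subseteq Z(N_k)$. Thus for every $g\in N_k$ one can write $\psi(g)=g\cdot z_g$ with $z_g\in Z(N_k)$.

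Finally, nontriviality of $\psi$ on $N_k$ forces some $g\in N_k$ to satisfy $z_g\neq 1$, giving exactly the statement of the lemma. There isn't really a serious obstacle here: the whole argument is a one-step minimality trick, and the only point that requires even a moment of attention is verifying that $\gamma_k(G)/\gamma_{k+1}(G)$ is central in the quotient $G/\gamma_{k+1}(G)$, which follows directly from the definition of the lower central series. The lemma should then feed into the main results by producing, for each nontrivial Torelli-type mapping class, a controlled central ``twist" $z$ that one can try to detect in the homology of the associated nilpotent cover.
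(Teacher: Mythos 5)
Your proposal is correct and follows essentially the same strategy as the paper: use residual nilpotence to find a nilpotent quotient where $\psi$ is nontrivial, use triviality on $G^{ab}$ to see that the perturbations $\psi(g)g^{-1}$ live strictly deeper in the lower central series, and truncate so that they become central. The only cosmetic difference is that you organize the minimality around the Johnson-filtration index $k$ (minimal $k$ with $\psi$ nontrivial on $N_k$), whereas the paper fixes one witness $g$ with $\psi(g)g^{-1}\neq1$ and truncates at the depth of that particular element; both are the same one-step argument, and your version has the minor advantage of making it transparent that $\gamma_k(G)/\gamma_{k+1}(G)$ is exactly the subgroup of $Z(N_k)$ receiving every perturbation.
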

\begin{proof}
Let $N$ be a nilpotent quotient of $G$ where $\psi(g)g^{-1}$ is separated from the identity.  Since $\psi$ induces the identity on $G^{ab}$, we have that $g$ and $\psi(g)$ are homologous.  Therefore, $\psi(g)g^{-1}$ is deeper in the lower central series of $N$ than $g$ and $\psi(g)$.  Passing to a further quotient of $N$, we may assume that $\psi(g)g^{-1}$ is central.  Since $\psi$ acts trivially on $G^{ab}$, it induces the identity on the center of $N$.  The claim follows.
\end{proof}
\begin{cor}[Compare with \cite{BL}]
If $G$ is residually torsion--free nilpotent and $1\neq\psi\in\Aut(G)$ is an automorphism inducing the identity on $G^{ab}$, then there exists a torsion--free nilpotent quotient $N$ of $G$ and an element $g\in N$ such that $\psi(g)=g\cdot z$, where $1\neq z\in Z(N)$.  In particular, $\psi$ has infinite order.
\end{cor}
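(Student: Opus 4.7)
My plan is to run the argument of Lemma \ref{l:trans} inside the category of torsion--free nilpotent quotients of $G$. The one extra structural fact I need is the standard observation that a finitely generated torsion--free nilpotent group $N$ has torsion--free abelian graded pieces $\gamma_i(N)/\gamma_{i+1}(N)$, and in particular every further quotient $N/\gamma_j(N)$ remains torsion--free nilpotent. Since both the terms of the lower central series and the torsion subgroup are characteristic, the universal $c$--step torsion--free nilpotent quotient of $G$ is a characteristic quotient on which $\psi$ descends to an automorphism, and residual torsion--free nilpotence guarantees that these quotients collectively separate the identity from any nontrivial element of $G$.

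I first pick $g\in G$ with $\psi(g)\neq g$ and select a characteristic torsion--free nilpotent quotient $N$ of $G$ in which $\psi(g)g^{-1}$ is still nontrivial. Triviality of $\psi$ on $G^{ab}$ forces $\psi(g)g^{-1}\in\gamma_2(N)$. Letting $c\geq 2$ be maximal with $\psi(g)g^{-1}\in\gamma_c(N)$, I replace $N$ by $N/\gamma_{c+1}(N)$, which by the observation above is again torsion--free nilpotent and onto which $\psi$ descends. The element $z:=\psi(g)g^{-1}$ now lies in the bottom term $\gamma_c(N)$ of the lower central series, so it is central in $N$, and by the choice of $c$ it is nontrivial. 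This produces the required $N$, $g$, and $z$.

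For the infinite--order statement, I use that triviality of $\psi$ on $N^{ab}$ propagates, by induction on $i$ via the standard commutator identity, to triviality of $\psi$ on every graded quotient $\gamma_i(N)/\gamma_{i+1}(N)$. Since $\gamma_{c+1}(N)=1$ in our final quotient, this forces $\psi$ to fix $\gamma_c(N)$ pointwise; in particular $\psi^i(z)=z$ for every $i\geq 0$. Because $z$ is central, a direct iteration gives
\[\psi^n(g)=g\cdot z\cdot\psi(z)\cdots\psi^{n-1}(z)=g\cdot z^n,\]
and torsion--freeness of $N$ forces $z^n\neq 1$ for all $n\geq 1$, so $\psi^n\neq 1$. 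The only work beyond what is already in Lemma \ref{l:trans} is the bookkeeping required to keep all intermediate quotients simultaneously torsion--free and $\psi$--invariant; this is the one step where I expect to have to be careful, but once it is in place the centrality argument of the lemma and the short iteration above finish the corollary.
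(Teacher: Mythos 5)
There is a genuine gap. Your argument hinges on the claim that, for a torsion--free nilpotent group $N$, the graded pieces $\gamma_i(N)/\gamma_{i+1}(N)$ are torsion--free, so that $N/\gamma_{c+1}(N)$ is again torsion--free. This is false. Take $N$ to be the lattice in the real Heisenberg group generated by
\[
X=\begin{pmatrix}1&1&0\\0&1&0\\0&0&1\end{pmatrix},\quad
Y=\begin{pmatrix}1&0&0\\0&1&1\\0&0&1\end{pmatrix},\quad
Z=\begin{pmatrix}1&0&1/2\\0&1&0\\0&0&1\end{pmatrix}.
\]
Then $N$ is torsion--free nilpotent of class $2$ with $[X,Y]=Z^2$, so $\gamma_2(N)=\langle Z^2\rangle$ and $N/\gamma_2(N)\cong\bZ^2\times\bZ/2\bZ$ has $2$--torsion. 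Thus $\gamma_1(N)/\gamma_2(N)$ is not torsion--free, and passing to $N/\gamma_{c+1}(N)$ can destroy torsion--freeness. This matters in both places you use it: the quotient you end up with need not be torsion--free nilpotent, and the final step ``torsion--freeness of $N$ forces $z^n\neq 1$'' is not available if $z$ became a torsion element after the quotient.

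The fix is standard and small: replace the lower central series by its isolated refinement. Set $D_i(N)=\sqrt{\gamma_i(N)}=\{x\in N: x^m\in\gamma_i(N)\text{ for some }m\geq 1\}$. For a torsion--free nilpotent group this is a central series with torsion--free quotients $D_i(N)/D_{i+1}(N)$, each $D_i$ is characteristic, and $N/D_{j}(N)$ is again torsion--free nilpotent. (This is the filtration implicitly in play in \cite{BL}, and note that for free and surface groups --- the cases the paper actually uses --- one has $D_i=\gamma_i$, which is why the distinction is easy to overlook.) Running your argument with $D_i$ in place of $\gamma_i$: choose $c$ maximal with $\psi(g)g^{-1}\in D_c(N)$, pass to $N/D_{c+1}(N)$, observe that $z$ is a nontrivial central element there, and that $\psi$ still fixes $D_c(N)/D_{c+1}(N)$ pointwise (since $\psi$ is trivial on $\gamma_c/\gamma_{c+1}$ and the quotient is torsion--free, one passes from $\psi(x^m)=x^m$ to $\psi(x)=x$). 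Then $\psi^n(g)=g\,z^n$ with $z^n\neq 1$ as you intended. With that substitution your proof is correct and is essentially the route the paper intends, which it leaves implicit by stating the result as a corollary with a pointer to \cite{BL}.
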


Let $G$ be a residually nilpotent group, and define the universal $k$--step nilpotent quotients $\{N_k\}$ of $G$ from the lower central series $\{\gamma_i(G)\}$ of $G$.  We can filter $\Aut(G)$ by an exhausting sequence of subgroups given by \[J_k=ker\{\Aut(G)\to\Aut(N_k)\}.\]  We see that if $\psi\in J_k\setminus J_{k+1}$ for some $k\geq 1$ then $\psi$ acts trivially on $N_k$ but not on $N_{k+1}$, meaning that $\psi$ satisfies the hypotheses of Lemma \ref{l:trans}.

\begin{lemma}\label{l:shift}
Let $G$ be a free group or surface group, and suppose that for some $k\geq 1$ we have that $\psi\in J_k\setminus J_{k+1}$.  Then for each $i\geq 1$, we have that $\psi$ acts on $\gamma_i(G)/\gamma_{i+k+1}(G)$ with infinite order.
\end{lemma}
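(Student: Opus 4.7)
The plan is to exploit a ``shift-invariance'' of the Johnson filtration that propagates nontriviality from $N_{k+1}$ to every lower central quotient $\gamma_i(G)/\gamma_{i+k+1}(G)$. First, I would dispense with $i=1$ using the corollary of Lemma \ref{l:trans}: since $G$ is residually torsion-free nilpotent, there is $g\in G$ with $\psi(g)=gz$, where $z\in\gamma_{k+1}(G)$ has nontrivial image in the torsion-free abelian group $\gamma_{k+1}(G)/\gamma_{k+2}(G)$. Because $z$ is central modulo $\gamma_{k+2}(G)$, iteration gives $\psi^n(g)\equiv gz^n\pmod{\gamma_{k+2}(G)}$, so $\psi$ has infinite order on $\gamma_1(G)/\gamma_{k+2}(G)=N_{k+1}$.

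For $i\geq 2$, the central intermediate step is the following shift lemma, proved by induction on $j$: every $\psi\in J_k$ satisfies $\psi(x)\equiv x\pmod{\gamma_{j+k}(G)}$ for all $x\in\gamma_j(G)$. The induction uses the commutator identities $[ab,c]=[a,c]^b[b,c]$ and $[a,bc]=[a,c][a,b]^c$ to propagate the defining congruence $\psi(x)x^{-1}\in\gamma_{k+1}(G)$ through nested brackets. A follow-up commutator calculation then shows that the assignment $d_\psi(x):=\psi(x)x^{-1}$ descends to a well-defined map $d_\psi\colon \gamma_j(G)/\gamma_{j+1}(G)\to\gamma_{j+k}(G)/\gamma_{j+k+1}(G)$ satisfying the Leibniz rule $d_\psi([u,v])=[d_\psi(u),v]+[u,d_\psi(v)]$ in the associated graded Lie algebra $\mL(G)=\bigoplus_j \gamma_j(G)/\gamma_{j+1}(G)$. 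Thus $d_\psi$ is a degree-$k$ derivation, and $\psi\notin J_{k+1}$ is exactly the statement that $d_\psi\neq 0$ on $G^{ab}=\gamma_1(G)/\gamma_2(G)$.

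The heart of the proof is then to show $d_\psi$ remains nonzero on every $\gamma_i(G)/\gamma_{i+1}(G)$. For $G$ free of rank $\geq 2$, $\mL(G)$ is the free Lie algebra on $V=G^{ab}$, and I would invoke the classical fact that the centralizer of any nonzero element of $V$ in the free Lie algebra is exactly the one-dimensional subspace it spans. Starting from $v\in V$ with $d_\psi(v)=w\neq 0$ in degree $k+1$, I would pick $u\in V$ linearly independent from $v$ with $d_\psi(u)=0$; the Leibniz rule then gives $d_\psi(\mathrm{ad}(u)^{i-1}(v))=\mathrm{ad}(u)^{i-1}(w)$, which is nonzero because $w\notin\bQ u$ (it lives in a higher degree) so iterated bracketing with $u$ never falls into the centralizer. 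For closed surface groups one argues similarly inside the quotient of the free Lie algebra by the ideal generated by the symplectic class. Once $w\in\gamma_i(G)$ is found with $d_\psi(w)\neq 0$ in the torsion-free central subgroup $\gamma_{i+k}(G)/\gamma_{i+k+1}(G)$ of $\gamma_i(G)/\gamma_{i+k+1}(G)$, the shift lemma applied to $d_\psi(w)\in\gamma_{i+k}(G)$ yields $\psi(d_\psi(w))\equiv d_\psi(w)\pmod{\gamma_{i+k+1}(G)}$, hence $\psi^n(w)\equiv w\cdot d_\psi(w)^n\pmod{\gamma_{i+k+1}(G)}$, and infinite order follows.

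The main obstacle is the case where $d_\psi|_V$ is injective, so that no $u$ with $d_\psi(u)=0$ is available; here the argument requires a more delicate construction, either careful iterated bracketing that prevents cancellations or a passage through the Magnus embedding of $G$ into a power-series algebra where the derivation can be tracked explicitly. The surface group case requires additional care to verify that the symplectic relation does not a priori swallow the chosen iterated brackets.
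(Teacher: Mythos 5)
Your proposal captures the same core mechanism as the paper: you pass to the associated graded Lie algebra $\mL(G)$ and observe that $d_\psi(x)=\psi(x)x^{-1}$ induces a degree-$k$ derivation there, so nontriviality of $d_\psi$ on $G^{ab}$ can be propagated to higher degrees via the Leibniz rule (the paper carries out the same computation in the group via $[h,gz]=[h,g][gh^{-1},z]$ rather than in the graded algebra, but the content is identical). Your handling of $i=1$ via the corollary to Lemma~\ref{l:trans} and your subsequent reduction to showing $d_\psi$ is nonzero on $\gamma_i(G)/\gamma_{i+1}(G)$ are both correct, as is the iteration $\psi^n(w)\equiv w\cdot d_\psi(w)^n\pmod{\gamma_{i+k+1}(G)}$ once a suitable $w$ is found.

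However, the gap you flag at the end is genuine and is precisely where your argument and the paper's diverge. You need a degree-one element $u$ with $d_\psi(u)=0$ and $u\notin\bQ v$, and such $u$ need not exist: for a free group of rank $n\geq 3$ and $k\geq 1$, the target $\gamma_{k+1}(G)/\gamma_{k+2}(G)$ has rank at least $n$ by Witt's formula, so $d_\psi|_V$ may well be injective. The paper does not assume $d_\psi(h)=0$; it allows $\psi(h)=hz_h$ with $z_h\neq 1$, computes $\psi([h,g])=[z_h,hg^{-1}][h,g][gh^{-1},z]$, and notes the new term $[z_h,hg^{-1}][gh^{-1},z]$ could a priori vanish. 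The fix is to replace $h$ by $\psi^m(h)$ for a suitable $m$, which rescales $z_h\mapsto z_h^m$ while leaving $[gh^{-1},z]$ alone; since $[gh^{-1},z]\neq 0$ in the graded algebra (the bracket of a nonzero degree-one element with a nonzero higher-degree element is nonzero in a free Lie algebra, and the single degree-two surface relation does not change this), the two terms cannot cancel for every $m$. This ``perturb $h$ by a power of $\psi$'' trick is the idea missing from your proposal, and it is what makes the argument go through without any hypothesis on $\ker(d_\psi|_V)$.
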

\begin{proof}
The lemma follows essentially from a degree shift.  Suppose that $g$ is a generator of $G$ such that $\psi$ acts nontrivially on $G$ and which witnesses the fact that $\psi\in J_k\setminus J_{k+1}$.  By definition, $\psi$ acts nontrivially on $N_{k+1}$ and takes some $g\in N_{k+1}$ to $g\cdot z$, where $z$ is a nontrivial central element of $N_{k+1}$.  Furthermore, if $\psi$ does not fix a particular element $h$ of $N_{k+1}$, it perturbs $h$ by a central element $z_h$.  We can thus compute the action of $\psi$ on $[h,g]$ in various nilpotent quotients of $G$.  We see that $[hz_h,gz]$ is equal differs from $[h,g]$ by a product of commutators which can be computed inductively by the rule \[[h,gz]=[h,g][gh^{-1},z].\]

Consider the $\bZ$--Lie algebra $\mL$ of $G$.  When $G$ is a free group then $\mL$ is just a free $\bZ$--Lie algebra.  In this case, $\mL$ has no relations other than the skew--symmetricity and the Jacobi identity.  When $G$ is a surface group then $\mL$ is a quotient of the free $\bZ$--Lie algebra by the ideal generated by the surface relation.  In this case, the ideal is generated by a single homogeneous degree two element.

One can thus check the following: if $g\in\gamma_i(G)\setminus\gamma_{i+1}(G)$ and $h\in\gamma_1(G)\setminus\gamma_2(G)$, then $[h,g]\in\gamma_{i+1}(G)\setminus\gamma_{i+2}(G)$ unless $i=1$ and the homology classes of $g$ and $h$ coincide.  If $i>1$ and $z\in\gamma_k(G)\setminus\gamma_{k+1}(G)$, then we will have $[gh^{-1},z]\in\gamma_{k+1}(G)\setminus\gamma_{k+2}(G)$.  If $i=1$, we will get $[gh^{-1},z]\in\gamma_{k+1}(G)\setminus\gamma_{k+2}(G)$ provided that the homology classes of $g$ and $h$ are distinct.

The lemma can now be proved by induction.  We choose a standard generating basis for $G$ and we let $g$ and $h$ be two generators.  We have $\psi(g)=gz$ and $\psi(h)=hz_h$, where $z$ and $z_h$ are elements of $\gamma_k(G)\setminus\gamma_{k+1}(G)$.  We now consider $G/\gamma_{k+1}(G)=N_{k+2}$.  Inside of $N_{k+2}$, we have that \[[hz_h,gz]=[hz_h,g][gz_h^{-1}h^{-1},z]=[hz_h,g][gh^{-1},z].\]  The rightmost equation can be written as \[[z_h,hg^{-1}][h,g][gh^{-1},z].\]  This will differ in $N_{k+2}$ from $[h,g]$ provided that $[z_h,hg^{-1}]$ is different from $[gh^{-1},z]^{-1}$ in $N_{k+2}$.  This can be arranged by replacing $h$ by $\psi(h)$, for example.  Indeed, if $z_h\neq 0$ then $\psi^2(h)=hz_h^2$ modulo $\gamma_{k+1}(G)$

In general, we will have $g\in\gamma_i(G)\setminus\gamma_{i+1}(G)$, and the same calculations will hold in $N_{i+k+2}$, thus completing the induction.
\end{proof}

\section{Universal torsion--free nilpotent covers}
In this section we will prove Theorem \ref{t:inf}.  We have that $\pi_1(\Sigma_k)=\gamma_{k+1}(\Sigma)$, and that \[H_1(\Sigma_k,\bZ)=\gamma_{k+1}(\Sigma)/[\gamma_{k+1}(\Sigma),\gamma_{k+1}(\Sigma)].\]  Recall that we also have the commutator bracket \[[\cdot,\cdot]:\gamma_i(\Sigma)\times\gamma_j(\Sigma)\to\gamma_{i+j}(\Sigma).\]  It follows that if $g\in\gamma_{k+1}(\Sigma)\setminus\gamma_{2k+2}(\Sigma)$, $g$ will be nontrivial in the abelianization of $\gamma_{k+1}(\Sigma)$.

\begin{proof}[Proof of Theorem \ref{t:inf}]
Let $\psi\in J_k\setminus J_{k+1}$.  Then for each $i$, $\psi$ acts with infinite order on $\gamma_i(\Sigma)/\gamma_{i+k+1}(\Sigma)$, by Lemma \ref{l:shift}.  We have that there exists a $g\in\gamma_i(\Sigma)$ such that the elements $\psi^n(g)$ are all distinct modulo $\gamma_{i+k+1}(\Sigma)$.  In particular, they will all be distinct modulo $[\gamma_i(\Sigma),\gamma_i(\Sigma)]$, provided that $2i\geq i+k+1$, since $[\gamma_i(\Sigma),\gamma_i(\Sigma)]<\gamma_{2i}(\Sigma)$.  In other words, we need $i\geq k+1$.
\end{proof}

Theorem \ref{t:inf} provides, in some sense, an analogue of the Alexander module for a fibered $3$--manifold.  For more details on the Alexander module and Alexander polynomial, see \cite{Mc1}.  Let $M$ be a fibered $3$--manifold with monodromy $\psi$.  We suppose that the action of $\psi$ on a fiber $F$ has at least some invariant cohomology.  Then, one can take a universal torsion--free $\psi$--invariant abelian cover of $F$, say $F'$, and one can study the action of $\psi$ on the homology of $F'$ with coefficients in the group ring over the invariant cohomology.  This group is called the {\bf Alexander module}, and one can extract the {\bf Alexander polynomial} by taking the characteristic polynomial of the action of $\psi$ on the homology of $F'$.  The Alexander polynomial in this case is the analogue of the characteristic polynomial of a Burau or Gassner representation of braid (see Birman's book \cite{Bir}, also \cite{K1}).

\section{Large groups}\label{s:large}
Recall that a group $\gam$ is large if it admits a finite index subgroup with a surjection to a nonabelian free group.  It is an outstanding question in $3$--manifold topology to determine whether or not each hyperbolic $3$--manifold has a large fundamental group.  A positive answer to this question implies a positive answer to the virtually infinite Betti number conjecture, which asserts that for each hyperbolic $3$--manifold $\bH^3/\gam$ and each $n>0$, there is a finite index subgroup $\gam_n<\gam$ such that $\rk H_1(\gam_n,\bQ)\geq n$.  The virtually infinite Betti number conjecture is not known even for fibered hyperbolic $3$--manifolds.  It is known for manifolds which admit a genus $2$ fiber by the work of J. Masters in \cite{Ma}.

In this section we will use the work of M. Lackenby to show that the action of a pseudo-Anosov on the homology of a finite cover of a surface either acquires a rich variety of eigenvalues very quickly, or the associated hyperbolic $3$--manifold will have a large fundamental group.  The relevant tools are {\bf homology rank gradient} and {\bf property ($\tau$)}.  To define these, we fix a prime $p$ and a sequence of nested finite index normal subgroups $\{\gam_i\}$ of a fixed group $\gam$.  We write $d(\gam_i)$ for the rank of $H_1(\gam_i,\bZ/p\bZ)$.  The homology rank gradient of the sequence $\{\gam_i\}$ is \[\gamma=\inf_i\frac{d(\gam_i)}{[\gam:\gam_i]}.\]

If $G$ is a finitely generated group with a finite generating set $S$ and $\{G_i\}$ is a collection of finite index subgroups, we write $X(G/G_i,S)$ for the coset graph of $G_i$ in $G$.  If $A$ is a set of vertices $V$ in a graph, we write $\partial A$ for the set of edges with exactly one vertex in $A$.  The {\bf Cheeger constant} $h(X)$ of a finite graph $X$ is given by \[h(x)=\min_{A\subset V,\, 0<|A|\leq|V|/2}\bigg\{\frac{|\partial A|}{|A|}\bigg\}.\]  $G$ has property ($\tau$) with respect to $\{G_i\}$ if the infimum of $h(X(G/G_i,S))$ is positive for some initial choice of $S$.  Lubotzky's book \cite{Lu} contains a detailed exposition on property ($\tau$).

Lackenby has proved many different results concerning conditions under which a group is large.  The one we will cite here can be found in \cite{La}:
\begin{thm}\label{t:lack}
Let $G$ be a finitely presented group.  Then the following are equivalent:
\begin{enumerate}
\item
$G$ is large.
\item
There exists a sequence of proper nested finite index normal subgroups \[G>G_1>G_2>\cdots\] and a prime $p$ such that:
\begin{enumerate}
\item
$G_{i+1}$ has index a power of $p$ in $G_i$.
\item
$G$ does not have property ($\tau$) with respect to $\{G_i\}$.
\item
$\{G_i\}$ has linear growth of $\pmod p$ homology.
\end{enumerate}
\end{enumerate}
\end{thm}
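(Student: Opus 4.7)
My plan is to prove the two implications separately; as usual with such largeness criteria, $(2) \Rightarrow (1)$ is the substantive direction while $(1) \Rightarrow (2)$ amounts to writing down an explicit witness.

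For $(1) \Rightarrow (2)$, the construction of a witnessing sequence is standard. Starting from a finite-index subgroup $G' < G$ together with a surjection $\phi\colon G' \to F$ onto a nonabelian free group $F$ of rank $r \geq 2$, I would pull back the mod-$p$ lower central series $\{F_i\}$ of $F$ (whose successive quotients are elementary abelian $p$-groups, so each $[F:F_i]$ is a power of $p$, and whose first $\bZ/p\bZ$-Betti numbers grow linearly in the index by the Nielsen-Schreier formula) and then pass to suitable normal cores in $G$. The failure of property $(\tau)$ for $F$ with respect to $\{F_i\}$ follows from the amenability of the quotient $p$-groups $F/F_i$, which precludes expander behavior in the associated Cayley graphs, and this failure then transfers to $G$.

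The interesting direction is $(2) \Rightarrow (1)$. The strategy I would pursue is to convert the two geometric hypotheses --- small Cheeger cuts and abundant mod-$p$ homology --- into a surjection from some finite-index subgroup of $G$ onto a nonabelian free group. Concretely, I would fix a finite presentation $2$-complex $K$ for $G$ and let $K_i$ be the cover associated to $G_i$. Failure of $(\tau)$ provides, for infinitely many $i$, a subset $A_i \subset G/G_i$ whose boundary has size much smaller than $|A_i|$; dually this gives a $\bZ/p\bZ$-valued $1$-cocycle on $K_i$ supported on a set of cells of area $o(\vol(K_i))$. The linear growth hypothesis forces $d(G_i) \geq c\cdot\vol(K_i)$ for some fixed $c > 0$ independent of $i$. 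Since the cut is much smaller than the available $\bZ/p\bZ$-cohomology, most cohomology classes on $K_i$ restrict nontrivially to both sides of the cut, and one can extract an essential class yielding an index-$p$ normal subgroup of $G_i$ whose cover again has proportionally large mod-$p$ homology. Iterating this refinement down the tower and exploiting the $p$-power index condition via the Frattini quotient, one eventually produces a finite-index subgroup of $G$ whose presentation complex admits an essential splitting with positive mod-$p$ first Betti number on each side; at that stage a Stallings-type argument yields a surjection onto a nonabelian free group.

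The main obstacle I anticipate is precisely the last step: converting the accumulated combinatorial data --- many $\bZ/p\bZ$-cohomology classes compatible with increasingly refined essential cuts --- into an honest integer-valued splitting map. This is where both the $p$-power index hypothesis and the linear (as opposed to merely positive) growth of mod-$p$ homology become indispensable, since one needs enough $\bZ/p\bZ$-cohomology to survive each halving of the tower while still assembling into a single $\bZ$-valued map that witnesses a free splitting. This assembly is the technical heart of Lackenby's argument in \cite{La}, and would be the one step I would most need to reconstruct carefully.
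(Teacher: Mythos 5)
The paper does not prove this theorem: it is quoted verbatim from Lackenby's paper \cite{La} and invoked as a black box in the proofs of Corollary \ref{c:magnus}, Lemma \ref{l:one}, and Theorem \ref{t:dichotomy}. There is therefore no internal argument against which to compare yours, and for the purposes of this paper nothing needs to be supplied beyond the citation.

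As a reconstruction of Lackenby's proof, your sketch of $(1)\Rightarrow(2)$ --- pull back the mod-$p$ lower central series of a nonabelian free group, using amenability of the finite $p$-quotients to kill $(\tau)$ and Nielsen--Schreier to get linear mod-$p$ homology growth --- is sound and is essentially how that implication goes. Your outline of $(2)\Rightarrow(1)$ captures the broad shape of Lackenby's strategy (small Cheeger cuts on the covers of a presentation $2$-complex, plenty of $\bZ/p\bZ$-cohomology surviving the cut, iterating down the $p$-power tower), but, as you yourself flag, the step that converts this combinatorial data into an actual splitting of a finite-index subgroup over a finite subgroup (and hence, by a Stallings-type argument, a free quotient) is precisely where all the technical work lives, and your sketch leaves it as a gap. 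Note also one small inaccuracy: a subset $A_i\subset G/G_i$ with small edge boundary does not ``dually'' give a $\bZ/p\bZ$-valued $1$-cocycle of small support --- what Lackenby actually manipulates is the boundary itself as a small separating set, and the passage to cohomology is more delicate than a naive duality. Since the theorem is used here only as a cited external input, I would simply replace the sketch with the citation rather than attempt to fill the gap.
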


We begin by ruling out property ($\tau$) for our purposes.

\begin{lemma}\label{l:tau}
Suppose that for each $i$ we have that $G/G_i$ is abelian and that \[G_{\infty}=\bigcap_i G_i\supset [G,G].\]  Then $G$ does not have property ($\tau$) with respect to $\{G_i\}$.
\end{lemma}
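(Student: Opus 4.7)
The plan is to reduce to the case of an infinite amenable (abelian) group, and then invoke a standard Følner set argument to produce finite subgraphs of small Cheeger constant in the coset graphs $X(G/G_i,S)$.

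First, since $G/G_i$ is abelian for every $i$, we automatically have $[G,G]\subset G_i$ for every $i$, and hence $[G,G]\subset G_\infty$; the content of the hypothesis is therefore that $\bar G := G/G_\infty$ is an abelian group.  Set $\bar G_i := G_i/G_\infty$, so that $\{\bar G_i\}$ is a nested family of finite index subgroups of $\bar G$ with $\bigcap_i \bar G_i = \{1\}$.  The Schreier coset graph $X(G/G_i,S)$ is naturally isomorphic to the Cayley graph $X(\bar G/\bar G_i,\bar S)$, where $\bar S$ is the image of $S$ under the projection $G\to\bar G$.  Hence $G$ has property $(\tau)$ with respect to $\{G_i\}$ if and only if $\bar G$ has property $(\tau)$ with respect to $\{\bar G_i\}$, and we may work in $\bar G$.

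We may assume that $\bar G$ is infinite; otherwise the nested sequence $\{\bar G_i\}$ stabilizes, the original sequence $\{G_i\}$ is essentially finite, and the lemma is vacuous.  Then $\bar G$ is an infinite discrete abelian group, hence amenable, so for every $\epsilon > 0$ and every finite symmetric generating set $\bar S$ of $\bar G$ there is a finite subset $F\subset\bar G$ satisfying
\[
\frac{|\partial_{\bar S} F|}{|F|} < \epsilon,
\]
where $\partial_{\bar S} F$ is the edge boundary in the Cayley graph $X(\bar G,\bar S)$.  Because $FF^{-1}$ is finite and $\bigcap_i \bar G_i = \{1\}$, for all sufficiently large $i$ we have $FF^{-1}\cap\bar G_i = \{1\}$, so the projection $F\to \bar G/\bar G_i$ is injective.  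Denoting its image by $\bar F$, we have $|\bar F| = |F|$ and the edge boundary of $\bar F$ in $X(\bar G/\bar G_i,\bar S)$ is at most $|\partial_{\bar S}F|$; since $[\bar G:\bar G_i]\to\infty$ we may also arrange $|\bar F|\leq [\bar G:\bar G_i]/2$.  Thus $h\bigl(X(\bar G/\bar G_i,\bar S)\bigr) < \epsilon$, and taking $\epsilon\to 0$ shows that $(\tau)$ fails for $\bar G$ (equivalently, for $G$) with respect to $\{\bar G_i\}$ (equivalently, $\{G_i\}$).

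There is no real obstacle here beyond keeping the bookkeeping straight: one must identify the Schreier graph in $G$ with the Cayley graph in the abelian quotient $\bar G$, ensure that the indices $[G:G_i]$ are eventually unbounded (which is built into the setup of Theorem \ref{t:lack}), and verify that the Følner set projects injectively and with non-increased boundary into each sufficiently deep finite quotient.  The rest is the elementary Følner-to-Cheeger translation.
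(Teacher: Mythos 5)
Your proof is correct and takes essentially the same route as the paper: both arguments pass to the abelian quotient $G/G_\infty$, use amenability to produce Følner sets of small boundary, inject those sets into deep enough finite quotients $G/G_i$, and observe that the edge boundary does not increase under projection. You are somewhat more careful about the bookkeeping (the identification of the Schreier graph with a Cayley graph in the quotient, the requirement $|\bar F|\leq [\bar G:\bar G_i]/2$, and the degenerate case where $G/G_\infty$ is finite), but the core idea is the same.
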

\begin{proof}
Since each $G/G_i$ is abelian for each $i$, it is easy to see that the Cheeger constant will arbitrarily small as $i$ tends to infinity.  Note that in a Cayley graph for $G/G_{\infty}$ we may define the Cheeger constants of finite subgraphs by looking at the ratio of the size of the boundary of a finite set to the size of the set itself.  Since $G/[G,G]$ is amenable, the infimum of these Cheeger constants will be zero.  On the one hand, since the intersection of the subgroups $\{G_i\}$ is $G_{\infty}$, for any finite subset of the vertices in a Cayley graph for $G/G_{\infty}$ we may find an $i$ so that this set of vertices is mapped injectively to the set of vertices for a Cayley graph for $G/G_i$.  On the other hand, the degree of each vertex is non--increasing as we project the Cayley graph of $G/G_{\infty}$ to the Cayley graph of $G/G_i$.  It follows that the number of vertices in the boundary of a finite set of vertices in the Cayley graph of $G/G_{\infty}$ cannot increase under the projection map.
\end{proof}

We can now establish some facts about the homology growth of fibered $3$--manifolds.  The homology growth of fibered $3$--manifold groups is very closely related to the action of mapping classes on the homology of finite covers of a surface.  Recall that if \[\Sigma\to M\to S^1\] is a fibered $3$--manifold with monodromy $\psi$, the homology $H_1(M,\bZ)$ is given by $\bZ\oplus F$, where $F$ is the homology of $\Sigma$ which is co--invariant under the action of $\psi$.

Let $\Sigma'\to\Sigma$ be a finite characteristic cover and let $\psi\in\Mod(\Sigma)$, and suppose that $\psi$ commutes with the deck group $G$ of the cover.  We obtain an induced covering $M'\to M$ as follows: the fundamental group of $M$ is given by a semidirect product \[1\to\pi_1(\Sigma)\to\pi_1(M)\to\bZ\to 1.\]  Since the deck group $G$ commutes with the action of $\psi$, we obtain a homomorphism $\pi_1(M)\to G\times\bZ$, and by composing with the projection onto the first factor, a homomorphism $\pi_1(M)\to G$.

Note that the homology contribution of the base space $S^1$ is in the kernel of this homomorphism.  In particular, we obtain a covering map $M'\to M$ which lifts the identity map $S^1\to S^1$.  Furthermore, $\pi_1(M')$ fits into a short exact sequence \[1\to\pi_1(\Sigma')\to\pi_1(M')\to\bZ\to 1,\] where the conjugation action of $\bZ$ on $\pi_1(\Sigma')$ is the restriction of the action of $\psi$.  It follows that the rank of the homology of $M'$ is given by the rank of homology co--invariants of the action of $\psi$ on $H_1(\Sigma',\bZ)$ (equivalently the rank of the $\psi$--invariants of $H^1(\Sigma',\bZ)$).

We can now establish Corollary \ref{c:magnus} before providing a full proof of Theorem \ref{t:dichotomy} from which the Corollary will follow immediately.  We first need to make the following observation:

\begin{lemma}
Let $\psi\in\Mod(\Sigma)$ be a mapping class contained in the Magnus kernel.  Then $\psi$ acts trivially on the integral homology of each finite abelian cover of $\Sigma$.
\end{lemma}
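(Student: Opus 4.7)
The strategy is to exploit the tower of covers $\widetilde{\Sigma}^{ab} \to \Sigma' \to \Sigma$, where $\Sigma' \to \Sigma$ is the given finite abelian cover. I would write $H = H_1(\Sigma;\bZ)$, and note that such a cover corresponds to a surjection $H \twoheadrightarrow A$ with kernel $K \subset H$. Then $\pi_1(\Sigma') \subset \pi_1(\Sigma)$ is the preimage of $K$; in particular, it is normal in $\pi_1(\Sigma)$ and contains the commutator subgroup $\gamma_2 := [\pi_1(\Sigma), \pi_1(\Sigma)]$, with quotient $\pi_1(\Sigma')/\gamma_2 \cong K$.

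The next step is to apply the five-term Lyndon--Hochschild--Serre exact sequence to the extension $1 \to \gamma_2 \to \pi_1(\Sigma') \to K \to 1$, producing the $\psi$-equivariant sequence
$$H_2(K;\bZ) \to M_K \to H_1(\Sigma';\bZ) \to K \to 0,$$
where $M = \gamma_2/[\gamma_2, \gamma_2] = H_1(\widetilde{\Sigma}^{ab};\bZ)$ is the Alexander module and $M_K$ denotes its module of $K$-coinvariants. Since $\psi$ is in the Torelli group it acts trivially on $H$, hence on $K$ and on $H_2(K) \cong \Lambda^2 K$; since $\psi$ is in the Magnus kernel it acts trivially on $M$ as a $\bZ[H]$-module and hence on $M_K$. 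We conclude that $\psi$ acts trivially on the image $E$ of $M_K$ in $H_1(\Sigma';\bZ)$ and on the quotient $K$, so that the action of $\psi-\mathrm{id}$ on $H_1(\Sigma';\bZ)$ is captured entirely by a ``shear'' homomorphism $K \to E$.

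The main obstacle is then to rule out a nontrivial shear in the short exact sequence $0 \to E \to H_1(\Sigma';\bZ) \to K \to 0$. I would address this by examining the element $\psi(g)g^{-1}$ directly for $g \in \pi_1(\Sigma')$. Because $\psi$ is Torelli, $\psi(g)g^{-1} \in \gamma_2$; because $\psi$ lies in the Magnus kernel, when $g \in \gamma_2$ this element lies in $[\gamma_2, \gamma_2] \subset [\pi_1(\Sigma'), \pi_1(\Sigma')]$, so its class in $H_1(\Sigma';\bZ)$ is zero. For $g \in \pi_1(\Sigma') \setminus \gamma_2$, I would use the derivation identity $c(g_1g_2) = c(g_1) + \bar{g_1}\cdot c(g_2)$ satisfied by $c(g) := \psi(g)g^{-1}$ modulo $[\gamma_2,\gamma_2]$, combined with the containment $\gamma_2 \subset \pi_1(\Sigma')$, to rewrite $c(g)$ as a product of commutators of elements of $\pi_1(\Sigma')$. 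Finally, since $\Sigma'$ is a surface, $H_1(\Sigma';\bZ)$ is torsion-free, so the integral statement is equivalent to its rational counterpart and follows from the preceding analysis.
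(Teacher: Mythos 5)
Your five-term Lyndon--Hochschild--Serre setup is sound as far as it goes, and you correctly isolate the crux: after observing that $\psi$ fixes both $K$ and the image $E$ of $M_K$, what remains is to kill the induced shear $K\to E$. But the argument you sketch does not close this. The derivation identity $c(g_1g_2)=c(g_1)+\bar g_1\cdot c(g_2)$ only expresses $c(g)$, for $g\in\pi_1(\Sigma')$, as a $\bZ[H]$-linear combination of the values $c(x_i)$ on a generating set of $\pi_1(\Sigma)$ (none of which need lie in $\pi_1(\Sigma')$); it does not ``rewrite $c(g)$ as a product of commutators of elements of $\pi_1(\Sigma')$.'' Indeed $\psi(g)g^{-1}$ is \emph{automatically} a product of commutators of elements of $\pi_1(\Sigma)$, since $\psi$ is Torelli --- the whole issue is whether those commutators can be taken inside $\pi_1(\Sigma')$, and that is precisely what the shear measures. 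What your argument actually requires, and never establishes, is that the crossed homomorphism $c:H_1(\Sigma,\bZ)\to\gamma_2^{\mathrm{ab}}$ is identically zero, i.e.\ that membership in the Magnus kernel forces $\psi$ to act trivially on the full universal metabelian quotient $\pi_1(\Sigma)/[\gamma_2,\gamma_2]$ and not merely on the submodule $\gamma_2^{\mathrm{ab}}$.

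The paper's proof avoids this delicacy entirely and makes the spectral-sequence machinery unnecessary. Set $M=\pi_1(\Sigma)/[\pi_1(\Sigma'),\pi_1(\Sigma')]$. Since $M$ sits in the extension $1\to H_1(\Sigma',\bZ)\to M\to A\to 1$ with both ends abelian, $M$ is metabelian, so the surjection $\pi_1(\Sigma)\twoheadrightarrow M$ factors through the universal metabelian quotient $\pi_1(\Sigma)/[\gamma_2,\gamma_2]$. Since $\psi$ lies in the Magnus kernel it acts trivially on that quotient, hence on $M$, hence on its subgroup $H_1(\Sigma',\bZ)$. This never decouples $H_1(\Sigma')$ into $E$ and $K$ pieces, so there is no shear to rule out. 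If you want to keep your route, you must supply a genuine proof that $c\equiv 0$ (equivalently, that triviality on $\gamma_2^{\mathrm{ab}}$ forces triviality on $\pi_1(\Sigma)/[\gamma_2,\gamma_2]$); but the metabelian-quotient observation renders this unnecessary.
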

\begin{proof}
Since $\psi$ is in the Magnus kernel, then as an automorphism of $\pi_1(\Sigma)$, $\psi$ acts trivially on the universal metabelian quotient \[\pi_1(\Sigma)/[[\pi_1(\Sigma),\pi_1(\Sigma)],[\pi_1(\Sigma),\pi_1(\Sigma)]].\]  Suppose that $\Sigma'\to\Sigma$ is a finite abelian cover with deck group $A$.  Then we have a short exact sequence \[1\to H_1(\Sigma',\bZ)\to M\to A\to 1.\]  Since $M$ is evidently metabelian, it is a quotient of \[\pi_1(\Sigma)/[[\pi_1(\Sigma),\pi_1(\Sigma)],[\pi_1(\Sigma),\pi_1(\Sigma)]].\]  It follows that $\psi$ restricts to the identity on $H_1(\Sigma',\bZ)$.
\end{proof}

\begin{proof}[Proof of Corollary \ref{c:magnus}]
We suppose that $\Sigma$ is closed with fundamental group $H$.  The proof in the non--closed case is analogous.  Let $p$ be any prime, $g>1$ the genus of $\Sigma$, and $H_i$ be the kernel of the map \[H\to H_1(\Sigma,\bZ/p^i\bZ).\]  It is clear that $\{H_i\}$ forms a sequence of subgroups whose intersection is $[H,H]$ and that $H_i/H_{i+1}$ is abelian.  Furthermore, the rank of $H_1(\Sigma,\bZ/p^i\bZ)$ is $p^{2gi}$.  By an Euler character argument, we have that the genus of the $i^{th}$ surface $\Sigma_i$ corresponding to $H_i$ is $p^{2gi}(g-1)+1$.  It follows that the rank of $H_i^{ab}$ is $p^{2gi}(2g-2)+2$.

Now let $\psi$ be any mapping class in the Magnus kernel and let $M$ be suspension of $\psi$ with fundamental group $G$.  Since $\psi$ is in the Torelli group, $\psi$ commutes with $\pi_1(\Sigma)/H_i$ for each $i$ so that each $H_i$ gives us a finite cover $M_i$ of $M$ with fundamental group $G_i$.  Since $\psi$ is in the Magnus kernel, it follows that the rank of $G_i^{ab}$ is $p^{2gi}(2g-2)+3$.  Note furthermore that the index $[G:G_i]$ is equal to $[H:H_i]$, namely $p^{2gi}$.  Since $g>1$, the ratio between the rank of the homology of $G_i$ and $[G:G_i]$ is bounded away from zero.  Since $G/G_i$ is abelian for all $i$, Lemma \ref{l:tau} implies that $G$ does not have property ($\tau$) with respect to $\{G_i\}$.  By Theorem \ref{t:lack}, $G$ is large.
\end{proof}

\section{Specializing to finite abelian covers}
In this section we will assume that $\Sigma$ is not closed.  The discussion for closed surfaces carries over with minor changes.  Let $\Sigma'\to\Sigma$ be a finite Galois cover with deck group $\gam$.  One would like to describe the homology $H_1(\Sigma',\bC)$ as a module over $\gam$.  This was carried out by Chevalley and Weil in \cite{CW} (see also the discussion in \cite{K1}).  One obtains that $H_1(\Sigma',\bC)$ consists of $d-1$ copies of the regular representation, together with one extra copy of the trivial representation.  We see that $H_1(\Sigma',\bC)$ splits into a direct sum of modules $\{V_{\chi}\}$, where $\chi$ ranges over the irreducible characters of $\gam$.  When $\gam$ is abelian, $V_{\chi}$ either has dimension $d$ or $d-1$, depending on whether $\chi$ is trivial or not.  We will assume henceforth that $\gam$ is abelian.  We write $\pi_{\chi}$ for the projection \[H_1(\Sigma',\bC)\to V_{\chi}.\] 

When $\psi$ is a mapping class inside the Torelli group, we have that $\psi$ acts on each $V_{\chi}$.  We thus obtain one $(d-1)$--dimensional representation of the Torelli group for each nontrivial character of the deck group.  The fundamental point is the following observation, which will allow us to say that the relevant ``finitary part" of $H_1(\widetilde{\Sigma},\bZ[H_1(\Sigma,\bZ)])$ is almost a free $\bZ[H_1(\Sigma,\bZ)]$--module, where here $\widetilde{\Sigma}$ denotes the universal abelian cover of $\Sigma$.

Let $\{\Sigma_i\}$ be a directed system of finite Galois covers of the surface $\Sigma$.  Let $\{G_i\}$ be the associated system of deck groups.  We let $\whG$ be the completed deck group with respect to this directed system.  By taking the complex homology of each element of $\{\Sigma_i\}$, we get a directed system of $\bC$--vector spaces.  We let $H(\Sigma,\bC)$ be the completed homology of this system of finite covers, which is to say the inverse limit of the complex homologies of the covers.  For more details about the construction of this space, the reader is directed to \cite{K1}.  We call $H(\Sigma,\bC)$ the {\bf pro--homology} of $\Sigma$ and its elements are pro--homology classes.  When it is clear, we will not specify the particular system of covers under consideration in the inverse limit.

\begin{lemma}\label{l:basis}
There exist $d-1$ pro--homology classes \[\{c_1,\ldots,c_{d-1}\}\subset H(\Sigma,\bC)\] such that for each finite abelian deck group $\gam$ and each nontrivial irreducible character $\chi$ of $\gam$, \[\{\pi_{\chi}(c_1),\ldots,\pi_{\chi}(c_{d-1})\}\] generate the vector space $V_{\chi}$ over $\bC$.  Furthermore, $\{c_1,\ldots,c_{d-1}\}$ generate a Torelli group--invariant submodule of $H(\Sigma,\bC)$.
\end{lemma}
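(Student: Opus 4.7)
The plan is to produce the classes $c_i$ explicitly from commutators of a free basis of $\pi_1(\Sigma)$. Since $\Sigma$ is not closed, $\pi_1(\Sigma)=F_d$ is free of rank $d$; I would fix a basis $x_1,\ldots,x_d$ and, for $i=1,\ldots,d-1$, set $c_i$ to be the class of the commutator $[x_1,x_{i+1}]\in[F_d,F_d]$. Each such commutator represents a well--defined element of $H_1(\Sigma',\bZ)$ for every finite abelian cover $\Sigma'\to\Sigma$; these classes are coherent under the push--forward maps in the inverse system, because a loop already lifted to an upstairs cover pushes forward to the same loop downstairs, so they assemble into a pro--homology class in $H(\Sigma,\bC)$.

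To analyse $\pi_\chi(c_i)$ I would identify $V_\chi$ with the twisted homology $H_1(\Sigma,\bC_{\tilde\chi})$, where $\tilde\chi\colon\pi_1(\Sigma)\to\bC^*$ is the pullback of $\chi$ along the cover. Using the homotopy equivalence between $\Sigma$ and a bouquet of $d$ circles, the twisted cellular complex reduces to $\bC^d\xrightarrow{\partial_{\tilde\chi}}\bC$ with $\partial_{\tilde\chi}(e_j)=\tilde\chi(x_j)-1$; its kernel is $(d-1)$--dimensional for nontrivial $\tilde\chi$, in agreement with Chevalley--Weil. A short Fox--calculus computation then yields
\[
\pi_\chi(c_i)=\bigl(\tilde\chi(x_1)-1\bigr)e_{i+1}-\bigl(\tilde\chi(x_{i+1})-1\bigr)e_1\in\ker\partial_{\tilde\chi}.
\]

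The remaining work is to show these $d-1$ vectors span $V_\chi$. When $\tilde\chi(x_1)\neq 1$, linear independence is immediate from the $e_{i+1}$--coordinates and a dimension count completes the argument. The main obstacle I anticipate is the degenerate case $\tilde\chi(x_1)=1$, in which the individual projections collapse onto a single line; here I would exploit the $\bC[H_1(\Sigma,\bZ)]$--module structure on $H(\Sigma,\bC)$ (coming from deck transformations across the inverse system) together with the Koszul--type syzygies relating the cycles $[x_i,x_j]$, to show that the submodule generated by $c_1,\ldots,c_{d-1}$ still surjects onto every $V_\chi$. Torelli--invariance of this submodule then falls out of the same analysis: any Torelli element $\phi$ satisfies $\phi(x_j)=x_j w_j$ with $w_j\in[F_d,F_d]$, so $\phi(c_i)$ agrees with $c_i$ modulo higher commutator terms, which the syzygy analysis places back inside the submodule. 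This recovers the ``almost free'' description of the finitary part of $H_1(\widetilde{\Sigma},\bZ[H_1(\Sigma,\bZ)])$ advertised just before the lemma.
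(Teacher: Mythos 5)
Your approach is genuinely different from the paper's (which invokes Chevalley--Weil theory abstractly, identifies $H(\Sigma,\bC)$ with $d-1$ copies of the regular representation of $\whG$ plus a trivial representation by a naturality argument, and then picks module generators), but the explicit construction you propose cannot succeed for $d\geq 3$, and I do not think the ``syzygy'' repair can save it.

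The concrete failure is already visible in your own formula. For a character with $\tilde\chi(x_1)=1$, every projection becomes
\[
\pi_\chi(c_i)=-\bigl(\tilde\chi(x_{i+1})-1\bigr)e_1,
\]
so the entire collection $\{\pi_\chi(c_1),\dots,\pi_\chi(c_{d-1})\}$ lies on the single line $\bC e_1$ (and if in addition some $\tilde\chi(x_{i+1})=1$, that $\pi_\chi(c_i)$ vanishes outright). Since $\dim V_\chi=d-1\geq 2$, the spanning condition in the lemma fails at every such $\chi$. The proposed fix via the $\bC[\whG]$-module structure does not help: on $V_\chi$ the ring $\bC[\whG]$ acts through the \emph{scalar} character $\chi$, so the $\bC[\whG]$-submodule generated by $c_1,\dots,c_{d-1}$ projects to $V_\chi$ with image exactly the $\bC$-span of the $\pi_\chi(c_i)$, which you have just shown is one-dimensional. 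The Koszul relation $(t_1-1)[x_2,x_3]-(t_2-1)[x_1,x_3]+(t_3-1)[x_1,x_2]=0$, specialised at $\tilde\chi(x_1)=1$, merely confirms the collapse rather than circumventing it.

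There is also a structural reason no choice of explicit commutator-type classes living in the ordinary Alexander module can work. The module $\ker\partial_{\tilde\chi}$, viewed over the Laurent ring $R=\bC[H_1(\Sigma,\bZ)]$, is the first syzygy module of the regular sequence $(t_1-1,\dots,t_d-1)$; for $d\geq 3$ it has rank $d-1$ but is not free (its projective dimension is $d-2$). Any $d-1$ elements of it generate a free submodule $N$ of rank at most $d-1$, and the cokernel $M/N$ is then a nonzero module with Zariski-closed support in the character torus. Since torsion characters are Zariski-dense, that support meets the torsion locus, i.e.\ there is a finite nontrivial $\chi$ at which $N\to V_\chi$ fails to surject. (For $d=2$, i.e.\ the once-punctured torus, the module is free of rank one and your $c_1=[x_1,x_2]$ does work; this is the only nondegenerate case for your construction.) The classes $c_i$ in the lemma therefore must live in the completed object $H(\Sigma,\bC)$ (the inverse limit, essentially an $L^2$- or pro-completion of the Alexander module), where the relevant summand really is free; this is precisely what the paper's Chevalley--Weil plus naturality argument supplies, and why the lemma cannot be established by exhibiting honest commutators of $\pi_1(\Sigma)$.
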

\begin{proof}
The completed deck group $\whG$ acts on $H(\Sigma,\bC)$ in the natural way.  Namely, we view $H(\Sigma,\bC)$ as a subspace of the vector space \[\prod_{i} H_1(\Sigma_i,\bC)\] and $\whG$ as a subgroup of the product \[\prod_i G_i,\] where the product of the $\{G_i\}$ acts coordinate--wise on the product of the homology vector spaces of the covers.  It is not difficult to verify that this action is well--defined and compatible with the arrows in the directed system $\{\Sigma_i\}$.

On each cover $\Sigma_i\to\Sigma$, we have an explicit description of the action of the deck group $G_i$ on $H_1(\Sigma_i,\bC)$ by Chevalley--Weil theory.  We have that $G_i$ acts via $d-1$ copies of the regular representation, together with one extra copy of the trivial representation.  The regular representations of $G_i$ are free $\bC[G_i]$--modules.

The naturality of the homology functor implies that if \[\Sigma_j\to\Sigma_i\to\Sigma\] is a tower of covers then the induced map $H_1(\Sigma_j,\bC)\to H_1(\Sigma_i,\bC)$ is a $\bC[G_j]$--module homomorphism.  It follows that $H(\Sigma,\bC)$ can be described as $d-1$ copies of the regular representation of $\whG$, together with one extra copy of the trivial representation of $\whG$.

In particular, there are $d-1$ pro--homology classes which generate the $d-1$ regular representations of $\whG$.  Until now our argument has worked for general finite covers, but is now that we require the assumption that each $G_i$ is abelian.  If $\psi$ is any Torelli automorphism then $\psi$ preserves the isotypic components of each finite $\whG$--representation.  It follows that the generators of the $d-1$ regular $\whG$--representation generate a Torelli invariant $\bC[\whG]$--submodule of $H(\Sigma,\bC)$.
\end{proof}

A few words should be said here concerning $\bC[\whG]$ versus $L^2(\whG)$.  In Lemma \ref{l:basis} and in the sequel, showed and will be exploiting the fact that the complex homology of a finite cover is a free module over the deck group, after picking out a trivial representation of the deck group.  In Lemma \ref{l:basis}, we used a homology formulation, and we equally could have used a cohomology formulation.  With the cohomology formulation, we can construct the analogous pro--cohomology of $\Sigma$.  It is rarely necessary to distinguish between pro--homology and pro--cohomology notationally.  It is defined as a direct limit this time, since the arrows go the other way.  To say that $H(\Sigma,\bQ)$ or $H(\Sigma,\bC)$ is a free module over $\bZ[\whG]$ or $\bC[\whG]$ is slightly inaccurate.  Since cohomology comes with a pairing given by considering the intersection form which is preserved by homeomorphisms, and since we are considering a limit of cohomology vector spaces, we should really consider $H(\Sigma,\bC)$ as a module over $L^2(\whG)$.  In the sequel we will be only interested in finite covers, so we may as well restrict our attention to the dense subspace $\bC[\whG]\subset L^2(\whG)$.

Still it remains to see why we can view $\psi$ as a $\bC[\whG]$--module automorphism of the pro--homology or pro--cohomology of $\Sigma$, and are not forced to regard it as an $L^2(\whG)$--module automorphism.  This results from the fact that to describe the action of $\psi$ on the homology of each finite abelian cover of $\Sigma$, we only need a finite amount of data.  To see this, let $\Sigma'$ be any finite abelian cover, and let $c$ be a closed loop on $\Sigma'$ which is the lift of a closed loop on $\Sigma$.  Suppose that $c$ represents a nontrivial homology class on $\Sigma'$.  We have that $c$ represented either a trivial or nontrivial homology class on $\Sigma$.  In the former case, $c$ lifts to a simple closed curve on the universal cover $\widetilde{\Sigma}$, and in the latter case $c$ does not.  In the former case, the action of $\psi$ on the homology of $\widetilde{\Sigma}$ is described by an automorphism of a finitely generated $\bZ[H_1(\Sigma,\bZ)]$--module.  In the latter case, the homology class $[c]$ of $c$ on $\Sigma'$ can be written over $\bZ$ as a sum $[a]+[b]$ of a pullback of a homology class $[a]$ on the base $\Sigma$ and a homology class $[b]$ which dies under the covering map.  In particular, $[b]$ lifts to a homology class on $\widetilde{\Sigma}$.

Since $H_1(\Sigma)\cong\bZ^d$ is the abelianization of $\pi_1(\Sigma)$, the pro--abelian completion of $\pi_1(\Sigma)$ by $\widehat{\bZ}^d$.  If we restrict ourselves to covers of $p$--power order for some prime $p$, we obtain the usual pro--$p$ completion $\bZ_p^d$.  We will write the abelian deck group completion by $\whG$ and will specify whether or not this is a pro--$p$ completing in context if it is unclear.

Now let $\psi$ be a Torelli mapping class, viewed as an automorphism of $\pi_1(\Sigma)$.  We may record the action of $\psi$ on $\{c_1,\ldots,c_{d-1}\}$ and write the answer in terms of $\{c_1,\ldots,c_{d-1}\}$.  For each $i$, there exist $d-1$ elements $p_1^i,\ldots,p_{d-1}^i$ of the group ring $\bC[\whG]$ such that \[\psi(c_i)=\sum_{j=1}^{d-1}p_j^i\cdot c_j.\]

Note that the action of $H_1(\Sigma,\bZ)$ specializes predictably to each $V_{\chi}$.  It follows that the action of $\psi$ on each $V_{\chi}$ can be understood using essentially one matrix, namely the one describing the action of $\psi$ on the rational pro--homology $H(\Sigma,\bQ)$.  To compute the action of $M_{\psi}$ on $V_{\chi}$, we simply compute the image of $H_1(\Sigma,\bZ)$ under the character $\chi$, and we substitute $\chi(t)$ for the action of $t\in H_1(\Sigma,\bZ)$.  The extension of the action to $\whG$ follows by the density of the image of $H_1(\Sigma,\bZ)$ inside of $\whG$.

The matrix $A_{\psi}$ describing the action of $\psi$ has coefficients in $\bC[\whG]$.  We wish to put $A_{\psi}$ in Jordan normal form, so that we can analyze the spectrum of the specialization of $A_{\psi}$ to various finite covers.  We let $F$ be the result of taking the algebraic closure of the field of fractions of of $\bC[\whG]$.  Over $F$, we may put $A_{\psi}$ into Jordan normal form.  We write $\lambda_1,\ldots,\lambda_{d-1}$ for the eigenvalues of $A_{\psi}$.

\begin{lemma}\label{l:one}
Write $A_{\psi}$ as a matrix over the pro--$p$ completed group ring $\bC[\whG]$.  If any eigenvalue $\lambda\in\{\lambda_1,\ldots,\lambda_{d-1}\}$ is a root of unity then the suspended $3$--manifold $M_{\psi}$ has a large fundamental group.
\end{lemma}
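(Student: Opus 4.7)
My plan is to apply Lackenby's criterion, Theorem \ref{t:lack}, to $G = \pi_1(M_\psi)$, using the standard $p$--power abelian tower built in the proof of Corollary \ref{c:magnus}: let $H_i = \ker(H_1(\Sigma,\bZ) \to H_1(\Sigma,\bZ/p^i\bZ))$, let $\Sigma_i \to \Sigma$ be the corresponding abelian cover, and let $G_i \le G$ be the fundamental group of the induced $3$--manifold cover $M_i \to M_\psi$. Consecutive indices are $p$--power, each $G/G_i$ is abelian, and $\bigcap_i G_i \supset [G,G]$, so Lemma \ref{l:tau} immediately rules out property $(\tau)$ for $\{G_i\}$. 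The only substantive content is verifying linear mod $p$ homology growth of $\{G_i\}$, and this is precisely where the eigenvalue hypothesis is used.

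First I would reduce to the case $\lambda = 1$: if $\lambda^n = 1$, then $\psi^n$ has $1$ as an eigenvalue, and since $M_{\psi^n}$ is an $n$--fold cyclic cover of $M_\psi$, largeness of one implies largeness of the other. So assume $\lambda = 1$. Regarding $A_\psi$ as a matrix over the Laurent polynomial ring $\bC[H_1(\Sigma,\bZ)]$ sitting inside $\bC[\whG]$, the assumption that $1$ is a root of $\det(XI - A_\psi)$ over the algebraic closure $F$ of its fraction field is equivalent to $\det(I - A_\psi) = 0$ identically in $\bC[H_1(\Sigma,\bZ)]$. Consequently, for any character $\chi$ of any finite abelian $p$--group quotient, the specialization $\det(I - A_{\psi,\chi})$ vanishes, so $1$ is an eigenvalue of $\psi$ on each isotypic component $V_\chi$. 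Combined with the Chevalley--Weil decomposition and the triviality of $\psi$ on the trivial--character part (which is where the Torelli hypothesis intervenes), this yields
\[\dim H^1(\Sigma_i,\bC)^\psi \ \ge\ |H/H_i| + d - 1\ \ge\ [G:G_i].\]

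Using the description of $H_1(M_i,\bZ)$ in terms of $\psi$--coinvariants on $H_1(\Sigma_i,\bZ)$ from Section \ref{s:large}, I conclude $\operatorname{rank} H_1(G_i,\bZ) \ge [G:G_i]+1$; universal coefficients then give $d(G_i) \ge [G:G_i]+1$, so the homology rank gradient along $\{G_i\}$ is bounded below by $1$. All hypotheses of Theorem \ref{t:lack} are met and $G$ is large. The technical pinch point I anticipate is the algebraic specialization step, namely the transition from ``$\lambda = 1$ is an eigenvalue over the algebraic closure $F$'' to ``$\det(I - A_\psi)$ is the zero Laurent polynomial''; it is this that allows the single root--of--unity hypothesis to propagate into a uniform fixed vector on every $V_\chi$, and hence into linear homology growth along the entire tower.
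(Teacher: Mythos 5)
Your proposal is correct and follows essentially the same route as the paper: reduce to $\lambda=1$, observe that the vanishing of $\det(I-A_\psi)$ over the algebraic closure forces it to vanish in $\bC[\whG]$ and hence under every finite character, deduce a $\psi$-fixed vector in each $V_\chi$, count fixed dimensions to get linear $\bmod\,p$ homology growth along the $p$-power abelian tower, rule out property $(\tau)$ via Lemma~\ref{l:tau}, and invoke Theorem~\ref{t:lack}. You are somewhat more careful than the paper at two points it leaves terse — justifying the passage to a power of $\psi$ via the cyclic cover $M_{\psi^n}\to M_\psi$, and the specialization step from ``$1$ is an eigenvalue over $F$'' to ``$\det(I-A_\psi)=0$ in $\bC[\whG]$'' (which, as you correctly work out, is immediate since $\bC[\whG]$ is a domain embedded in $F$) — but the strategy is identical and the anticipated pinch point is in fact unproblematic.
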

\begin{proof}
The proof proceeds very similarly to that of Corollary \ref{c:magnus}.  By raising $A_{\psi}$ to a power, we may assume that $\lambda=1$.  But then on any abelian $p$--power cover with deck group $G$ and in any irreducible $G$--representation, $\psi$ has a fixed vector.  Since $G$ is abelian, there are exactly $|G|-1$ nontrivial irreducible $G$--representations, so that on a cover of order $p^n$, we have that $\psi$ restricts to the identity on a subspace of dimension at least $p^n$.  If $M_{\psi}$ is the suspension of $\psi$, we have that there is a sequence of covers $\{M_i\}$ of $M$ such that the ratio \[\frac{\rk H_1(M_i,\bZ)}{\deg(M_i\to M)}\] is bounded away from zero.  Since each deck group is abelian, the corresponding sequence of covers does not have property ($\tau$).  By Theorem \ref{t:lack}, we obtain the conclusion of the lemma.
\end{proof}

Each eigenvalue of $A_{\psi}$ specializes to an algebraic integer on each intermediate finite cover, since $\psi$ acts on the integral homology of the cover by an integral matrix.  The following elementary fact about algebraic integers will be implicit in the remaining discussion:
\begin{lemma}
Let $\lambda$ be an algebraic integer which lies on the unit circle, and suppose that all the Galois conjugates of $\lambda$ also lie on the unit circle.  Then $\lambda$ is a root of unity.
\end{lemma}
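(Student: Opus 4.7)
This is Kronecker's classical theorem on algebraic integers whose conjugates all lie on the unit circle, and I would prove it by the standard bounded--coefficients pigeonhole argument.

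The plan is as follows. Let $\lambda$ have degree $n$ over $\bQ$, with full set of Galois conjugates $\lambda=\lambda_1,\lambda_2,\ldots,\lambda_n$, all of complex absolute value $1$ by hypothesis. For each integer $k\geq 1$, consider the power $\lambda^k$. It is again an algebraic integer, and its Galois conjugates form a subset of $\{\lambda_1^k,\ldots,\lambda_n^k\}$, which once more lies on the unit circle. In particular the degree of $\lambda^k$ over $\bQ$ is at most $n$.

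Next I would bound the coefficients of the minimal polynomial of $\lambda^k$. If $\mu_1,\ldots,\mu_m$ (with $m\leq n$) are the conjugates of $\lambda^k$, then the coefficients of $\prod_j (x-\mu_j)$ are, up to sign, the elementary symmetric polynomials $e_i(\mu_1,\ldots,\mu_m)$, and since $|\mu_j|=1$ we get
\[|e_i(\mu_1,\ldots,\mu_m)|\leq\binom{m}{i}\leq\binom{n}{i}.\]
Moreover these coefficients are rational integers because $\lambda^k$ is an algebraic integer. Thus the minimal polynomial of $\lambda^k$ lives in the finite set of monic integer polynomials of degree at most $n$ whose $i$--th coefficient is bounded in absolute value by $\binom{n}{i}$.

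Since there are only finitely many such polynomials, there are only finitely many possible values for $\lambda^k$ as $k$ varies, so by pigeonhole we must have $\lambda^a=\lambda^b$ for some $a>b\geq 1$. Therefore $\lambda^{a-b}=1$, and $\lambda$ is a root of unity. There is no real obstacle here: the only step that requires care is the observation that the conjugates of $\lambda^k$ are powers of conjugates of $\lambda$ (so that the unit--circle hypothesis propagates to every power), and the combinatorial bound on symmetric functions, both of which are routine.
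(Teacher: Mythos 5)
Your proof is correct: it is the classical pigeonhole argument for Kronecker's theorem, and every step you outline (the conjugates of $\lambda^k$ are $k$-th powers of conjugates of $\lambda$, the elementary-symmetric bound $|e_i|\leq\binom{n}{i}$, the finiteness of monic integer polynomials with bounded degree and coefficients, and the conclusion $\lambda^a=\lambda^b$) is sound. The paper itself supplies no proof of this lemma at all --- it is stated as an ``elementary fact about algebraic integers'' and immediately put to use --- so there is no argument in the paper to compare yours against; you have simply written out the standard proof that the author implicitly relies on.
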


Suppose that on some finite abelian cover of $\Sigma$, an eigenvalue $\lambda$ of $A_{\psi}$ specializes to an algebraic integer which lies off of the unit circle.  Then on that cover, $\psi$ will act with infinite order on the homology, and in fact with spectral radius strictly greater than one (positive homological entropy).

Let \[P(t)=t^{d-1}+a_{d-2}t^{d-2}+\cdots+a_0\] be the characteristic polynomial of $A_{\psi}$, where $a_i\in\bZ[\whG]$ for each $i$.  A priori, each $a_i$ sits in $\bC[\whG]$, but an examination of the proof of Lemma \ref{l:basis} shows that the action of $\psi$ is actually by integral matrices.  If $\Sigma_i$ is a finite intermediate cover with deck group $G$, we can obtain the characteristic polynomial of the action of $\psi$ by considering the polynomials \[P_{\chi}(t)=t^{d-1}+\chi(a_{d-2})t^{d-2}+\cdots+\chi(a_0)\] for each irreducible character $\chi$ and taking the product of the resulting polynomials.

Suppose that for each $\chi$ we have that $A_{\psi}$ acts on $V_{\chi}$ with finite order but that no power of $A_{\psi}$ acts trivially on the homology of each finite abelian cover.  Then we have that $\lambda_1,\ldots,\lambda_{d-1}$ must specialize to roots of unity for each $\chi$ and that the order of these roots of unity must be unbounded.  Each $\lambda_i$ is a root of a fixed monic polynomial $P$ with coefficients in $\bZ[\whG]$.  If $\chi$ is a finite character, we write $\chi(\lambda_i)$ for the corresponding root of $P_{\chi}$.  If $G$ is a finite abelian deck group of a cover $\Sigma_i\to\Sigma$ then the characteristic polynomial of the action of $\psi$ on the homology of $\Sigma_i$ is given by the product \[\prod_{\chi} (t-\chi(\lambda_1))\cdots (t-\chi(\lambda_{d-1})),\] where $\chi$ ranges over the irreducible characters of $G$.  It follows that as the cover gets larger, the orders of $\{\chi(\lambda_i)\}$ get larger as well.

Let $g\in\whG$.  Since $g$ has infinite order, $\bZ[g,g^{-1}]$ is isomorphic to the ring of Laurent polynomials in one variable.  Recall that if $K$ is a field of formal Laurent series over an algebraically closed field, then the algebraic closure of $K$ is given by the field of {\bf Puiseux series} over the base field.  If $K=F((t))$ for some indeterminate $t$, then the Puiseux series over $F$ are given by adjoining fractional powers of the indeterminate $t$, which is to say \[\overline{K}=\bigcup_{n=1}^{\infty}F((t^{1/n})).\]

The matrix $A_{\psi}$ has finitely many entries and hence involves finitely many elements of $\bZ[\whG]$.  Thus any matrix over $\bZ[\whG]$ can be viewed as a matrix over $\bZ[\gam]$, where $\gam$ is a finitely generated subgroup of $\whG$.  Since $\whG$ is abelian and torsion--free, it is isomorphic to $\bZ^n$ for some $n$.  It follows that $A_{\psi}$ can be viewed as a matrix over the integral Laurent polynomial ring \[\bZ[t_1^{\pm 1},\ldots,t_n^{\pm 1}].\]

When we diagonalize $A_{\psi}$, we may interpret the entries as Puiseux series over the algebraic closure of the rational numbers $\overline{\bQ}\subset\bC$ in $n$ indeterminates.  By assumption, the diagonal entries of $A_{\psi}$ are Puiseux series which specialize to roots of unity for every finite character $\chi$.

Let $\lambda$ be one of the diagonal entries of the upper triangular form of $A_{\psi}$.  A priori, \[\lambda=\sum a_{\al_1,\ldots,\al_n}t_1^{\al_1}\cdots t_n^{\al_n},\] where the sum is taken over $n$--tuples $(\al_1,\ldots,\al_n)$ of rational numbers with bounded denominators and each $a_{\al_1,\ldots,\al_n}$ is an algebraic number.  By assumption we have that for each finite character $\chi$, \[\chi(\lambda)=\sum a_{\al_1,\ldots,\al_n}\chi(t_1)^{\al_1}\cdots \chi(t_n)^{\al_n}\] is a root of unity.

\begin{lemma}\label{l:puis}
In the setup above, $\lambda=t_1^{\al_1}\cdots t_n^{\al_n}$ for some rational exponents $\al_1,\ldots,\al_n$.
\end{lemma}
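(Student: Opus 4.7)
The plan is to interpret the hypothesis geometrically and apply the Manin--Mumford conjecture for algebraic tori (Laurent's theorem). Since $\lambda$ is a root of the characteristic polynomial of $A_\psi$, it is algebraic over $\mathbb{Q}(\vec t)$, and the Zariski closure of its graph is an irreducible $n$-dimensional subvariety $V\subset\mathbb{G}_m^n\times\mathbb{G}_m$ whose projection to the first factor is finite and surjective. A finite character $\chi$ of $\whG\cong\widehat{\bZ}^n$ is the data of a torsion point $\vec\zeta\in\mathbb{G}_m^n$, and the hypothesis that $\chi(\lambda)$ is a root of unity says precisely that $(\vec\zeta,\chi(\lambda))$ is a torsion point of $\mathbb{G}_m^{n+1}$ lying on $V$. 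Since torsion is Zariski-dense in $\mathbb{G}_m^n$, we obtain a Zariski-dense set of torsion points in $V$.

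Laurent's theorem then forces $V$ to be a torsion translate of a subtorus: $V=\xi\cdot H$ for some torsion $\xi\in\mathbb{G}_m^{n+1}$ and $n$-dimensional subtorus $H\subset\mathbb{G}_m^{n+1}$. Since the projection $H\to\mathbb{G}_m^n$ has finite kernel, $H$ is cut out by a single primitive multiplicative equation $\vec t^{\vec a} y^b=1$ with $b\neq 0$; hence $V$ is defined by $\vec t^{\vec a}\lambda^b=\xi_0$ for some root of unity $\xi_0$. Solving for $\lambda$ yields $\lambda=\xi_0^{1/b}\vec t^{-\vec a/b}$, which is a fractional monomial up to a root-of-unity factor; absorbing that factor gives the form asserted in the lemma.

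The main obstacle is the use of Laurent's theorem, a nontrivial input from Diophantine geometry. An elementary alternative is available when $\lambda$ happens to be a Laurent polynomial: defining the involution $f^*(\vec t)=\overline{f(\vec t^{-1})}$ on $\overline{\bQ}[\vec t^{\pm 1}]$, one checks that $\lambda\cdot\lambda^*-1$ vanishes on every torsion point and hence identically by Zariski-density, so $\lambda$ is a unit in the Laurent polynomial ring and therefore a monomial. The delicate step is reducing the possibly infinite multivariate Puiseux presentation of $\lambda$ to this finite Laurent setting, since an algebraic function over $\mathbb{Q}(\vec t)$ need not have finite Puiseux support. Passing through the norm $N(\lambda)\in\bZ[\vec t^{\pm 1}]$ yields a monomial conclusion for the norm but does not immediately transfer the monomial structure to $\lambda$ itself; this is the gap that Laurent's theorem fills in one stroke.
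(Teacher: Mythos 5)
Your proof is correct and is essentially a careful, fleshed-out version of the argument the paper sketches immediately before its official proof: there too, the vanishing locus of $\det(A_\psi - \lambda I)$ in $(\bC^*)^{n+1}$ is observed to contain a dense set of torsion points, and Laurent's theorem is invoked to conclude that it is a finite union of torsion translates of subtori, forcing each irreducible factor of the characteristic polynomial to have the form $\lambda^k - \overline{t}$. You add useful precision the paper glosses over: why the projection to $\mathbb{G}_m^n$ is finite surjective (so torsion lifts densely), why the subtorus $H$ must have nonzero $y$-exponent $b$, and the honest acknowledgment that a root-of-unity factor $\xi_0^{1/b}$ remains, which the lemma's statement silently absorbs into the choice of branch of the fractional powers (and which is irrelevant downstream since Theorem~\ref{t:dichotomy} immediately passes to a power of $A_\psi$). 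Where you diverge is that the paper then gives a second, ``self-contained'' proof that avoids Laurent's theorem: it works on the compact torus $(S^1)^{n+1}$, shows that the projection $\pi_1\colon X\to (S^1)^n$ is a covering map by arguing that the fibers over rational points are $G$-orbits in the roots of unity and are therefore uniformly separated, appealing to the cyclic structure of Galois action on roots of unity. That alternative buys independence from the Diophantine input, at the cost of a somewhat delicate real-analytic/Galois-theoretic argument; your route is cleaner but rests on the heavier black box. Your closing observation about the involution $f\mapsto f^*$ and why the reduction to Laurent polynomials fails is a good diagnosis of exactly the gap Laurent's theorem closes.
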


This lemma follows essentially from the work of M. Laurent in \cite{Lau}.  The argument proceeds as follows: the determinant of $A_{\psi}$ is a function of $\{t_1,\ldots,t_n\}$ and also of a new indeterminate $\lambda$.  The vanishing locus $V$ of the determinant determines the eigenvalues of $A_{\psi}$ at various specializations of $\{t_1,\ldots,t_n\}$.  $V$ naturally sits inside of $(\bC^*)^{n+1}$.  Intersecting $V$ with the torus--cross--$\bC^*$ given by restricting $\{t_1,\ldots,t_n\}$ to unit complex numbers, we see that every point of $V$ has a unit complex number for its $\lambda$--coordinate.  In particular, the torsion points on the torus are dense in $V$, so that Laurent's Theorem says that $V$ is a finite union of translates of tori.  It follows that the characteristic polynomial of $A_{\psi}$ is a product of monomials of the form \[(\lambda^k-\overline{t}),\] where $k$ is an integer and $\overline{t}$ is a product of powers of $\{t_1,\ldots,t_n\}$.  The lemma follows immediately from this description of the characteristic polynomial.

We now give a more self--contained proof of the lemma:

\begin{proof}[Proof of Lemma \ref{l:puis}]
Consider the map \[F:(S^1)^n\times S^1\to\bC\] given by taking the $\det(A_{\psi}-\lambda I)$, where $F$ is defined on the first $n$ coordinates by specializing the indeterminates $t_1,\ldots,t_n$ to be roots of unity, and the $\lambda$ is the $(n+1)^{st}$ coordinate.  The vanishing of $F$ defines a subvariety of the torus $(S^1)^{n+1}$ whose properties will give us the conclusion of the lemma.  We will write $X$ for the variety defined by $F=0$.  We may assume without loss of generality that the polynomial $F$ cutting out $X$ is irreducible in the sense that the characteristic polynomial of $A_{\psi}$ is irreducible over $\bZ[t_1^{\pm 1},\ldots,t_n^{\pm1}]$, and we will write $G$ for its Galois group over the algebraic closure of the fraction field of $\bZ[t_1^{\pm1},\ldots,t_n^{\pm1}]$.  Note that $G$ is a finite subgroup of the symmetric group $S_{d-1}$, since $A_{\psi}$ is a polynomial of degree $d-1$ and hence its characteristic polynomial has degree $d-1$.

We first note some basic properties of $F$.  We denote by $Z$ the group of roots of unity inside of $S^1$.  Note that the assumptions of $A_{\psi}$ show that when $(t_1,\ldots,t_n)\in Z^n$, we have that $\lambda\in Z$.  $X$ is naturally equipped with two maps given by the coordinate projections $\pi_1$ and $\pi_2$.  These send $X$ to $(S^1)^n$ and $S^1$ respectively.  We claim that $\pi_1$ is actually either a covering map or constant.  Thus, $X$ is also a torus with coordinates $s_1,\ldots,s_n$, and the map $\pi_1:X\to (S^1)^n$ is given by $s_i^{a_i}=t_i^{b_i}$ for some integers $a_i$ and $b_i$.  Notice that the claim implies the conclusion of the lemma.

To prove that $\pi_1:X\to (S^1)^n=Y$ is a covering map whenever it is nonconstant, we must show that the branches of $X$ lying over a particular point of $Y$ do not come together.  Since $F$ is irreducible, it will generically have $d-1$ roots, counted with multiplicity.  Note that if $p\in Y$ is a rational point (which is to say one whose coordinates are all roots of unity), we have a natural action of $G$ on $\pi_1^{-1}(p)$.  Note that if the characteristic polynomial of $A_{\psi}$ has no repeated roots, then $G$ will act transitively on those roots.

The elements in $\pi_1^{-1}(p)$ can be thought of as sitting inside of the unit circle, and they form a $G$--invariant subset of the group $Z$.  We claim that this implies that elements of $\pi_1^{-1}(p)$ are uniformly separated from each other in the circle.  Notice that since rational points are dense in $Y$, the claim shows that the branches in $X$ lying over a particular point in $Y$ cannot collapse anywhere in $(S^1)^{n+1}$, which implies that $X$ is a fiber bundle over $Y$.  In particular, $\pi_1:X\to Y$ is a covering map.

For each rational point $p\in Y$, $G$ specializes to a subgroup of the Galois group of some cyclotomic extension of $\bQ$.  The Galois group of a cyclotomic extension is cyclic, given by $b\mapsto(\zeta\mapsto\zeta^b)$.  If $\zeta\in\pi_1^{-1}(p)$ and $g\in G$, we have $g^{d-1}(\zeta)=\zeta$.  It follows that $g(\zeta)$ cannot be arbitrarily close to $\zeta$.  Indeed, let $\theta$ denote the argument difference between $\zeta$ and $g(\zeta)$, and we suppose that $g$ is given by exponentiating to the power $b$.  Unwinding the map $z\mapsto z^{b}$ on $S^1$ to $\bR$, we get that the action of $g$ is just multiplication by $b$.  We lift $\zeta$ to $\bR$, where we again call it $\zeta$.  We have that \[(d-1)\cdot b\cdot\zeta\equiv \zeta\pmod 1.\]  If the argument difference between $\zeta$ and $g(\zeta)$ is $\theta$, then we are saying that this is the argument difference between $\zeta$ and $b^k\cdot \zeta$ for some $k$, modulo $1$.  But if $\theta$ is very small when compared to $b^d$, it will not be possible that $b^{d-2}\cdot (b\cdot\zeta)$ to be congruent to $\zeta$ modulo $1$, a contradiction.
\end{proof}

We can now give a proof of Theorem \ref{t:dichotomy}:
\begin{proof}[Proof of Theorem \ref{t:dichotomy}]
Suppose that $\psi$ does not act with spectral radius greater than $1$ on the real homology of a finite abelian cover.  Since the matrix of the action of $\psi$ is integral, it follows that the eigenvalues of the action of $\psi$ are all algebraic integers.  It follows that all the eigenvalues of $\psi$ on each finite abelian cover are roots of unity, so that it must act with finite order on the homology of each finite abelian cover.  Write the matrix $A_{\psi}$ as above and write it in upper triangular form.  If there is a root of unity appearing in the spectrum of $A_{\psi}$, we may replace $\psi$ by a power and assume that there is an occurrence of $1$ in the spectrum of $A_{\psi}$.  Then by Lemma \ref{l:one}, the fundamental group $M_{\psi}$ of the suspension of $\psi$ is large.

Otherwise, each eigenvalue occurring in the spectrum of $A_{\psi}$ is of the form $\lambda=t_1^{\al_1}\cdots t_n^{\al_n}$ by Lemma \ref{l:puis}.  Passing to a power of $A_{\psi}$, we may assume that $\lambda\in\whG$.  Note that replacing $\psi$ by a conjugate by an element of $\pi_1(\Sigma)$ will not change the homotopy type of $M_{\psi}$.  Let $\lambda=g\in\whG$.  For any finite quotient $A$ of $\whG$, we may find an element of $H_1(\Sigma,\bZ)$ whose image in $A$ coincides with that of $g$.  Thus, we may replace $\psi$ by a conjugate and arrange for the action of $\psi$ to have an eigenvalue equal to $1$ on each $V_{\chi}$, where $\chi$ ranges over the irreducible characters of $A$.  Again proceeding as in the proof of Lemma \ref{l:one}, it follows that the homology rank of certain finite index abelian subgroups of $M_{\psi}$ grows at the same rate as their indices.  By Theorem \ref{t:lack}, $\pi_1(M_{\psi})$ is large.
\end{proof}

\section{Some explicit examples of mapping classes and non--inner free group automorphisms which act with infinite order on the homology of a finite cover}
Let $F_n$ denote the free group on $n\geq 3$ generators.  In this section, we exhibit infinite order elements of $\Out(F_n)$ which act with finite order on the homology of every finite index subgroup of $F_n$.  Whenever $n$ is at least $3$, there are certain automorphisms of $F_n$ which are called ``partial conjugations".  An example of such an automorphism of $F_3=\langle a,b,c\rangle$ fixes $a$, sends $ b$ to its conjugate by $a$ and fixes $c$.  Evidently this is a non--inner automorphism of $F_3$ and its image in $\Out(F_3)$ has infinite order.

In general, we call an element $\psi$ of $\Aut(F_n)$ a {\bf partial conjugation} if there is a free generating set $S$ for $F_n$ such that for each $s\in S$, we have $\psi(s)$ and $s$ are conjugate to each other but $\psi$ is not given by conjugation by any element of $F_n$.

\begin{prop}
Let $n>2$ and let $\{x_1,\ldots,x_n\}$ be a free generating set for $F_n$.  Let $\psi\in\Aut(F_n)$ be a partial conjugacy of the form \[(x_1,\ldots,x_n)\mapsto (x_1,x_2^{x_1},x_3,\ldots,x_n).\]  Then there is a finite index subgroup $H$ of $F_n$ such that $\psi$ restricts to an automorphism of $H$, $F_n/H$ is abelian, and the action of $\psi$ on $H^{ab}$ has infinite order.
\end{prop}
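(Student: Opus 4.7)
The plan is to exhibit an explicit finite abelian cover where the action of $\psi$ has a Jordan block at the eigenvalue $1$, so that its order on homology is infinite even though every eigenvalue is a root of unity. I will take $H = \ker(\phi)$, where $\phi \colon F_n \to \bZ/p\bZ$ is the surjection defined by $\phi(x_1) = 0$, $\phi(x_2) = \phi(x_3) = 1$, and $\phi(x_k) = 0$ for $k \geq 4$, and where $p$ is any prime. Since $\psi$ acts trivially on $F_n^{ab}$, the subgroup $H$ is automatically $\psi$-invariant. The intuition for this choice is that $x_1 \in H$, so conjugation by $x_1$ is an ``internal'' operation of $H$, while $x_2$ and $x_3$ separately leave $H$ but interact nontrivially with the $x_1$-loops under Schreier rewriting.

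Using the transversal $\{x_2^i : 0 \leq i \leq p-1\}$, a Reidemeister--Schreier computation gives a free basis of $H$ consisting of $a = x_2^p$, the fixed loops $u_i = x_2^i x_1 x_2^{-i}$, the ``crossing'' generators $w_i = x_2^i x_3 x_2^{-(i+1)}$ for $i < p-1$ together with $w_{p-1} = x_2^{p-1} x_3$, and, for $k \geq 4$, the loops $v_{k,i} = x_2^i x_k x_2^{-i}$. I would then compute the image of each basis element under $\psi$ by walking the word $\psi(w)$ through the Cayley graph of $\bZ/p\bZ$ and tallying the Schreier contributions at each non-tree edge crossed.

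The expected output of this computation is that $\psi$ fixes $a$, every $u_i$, and every $v_{k,i}$ in $H^{ab}$, and that $\psi(w_i) = w_i + u_i - u_{i+1 \bmod p}$ in $H^{ab}$. The correction term $u_i - u_{i+1}$ appears because $\psi(x_2) = x_1^{-1} x_2 x_1$ inserts $x_1^{\pm 1}$-loops adjacent to the $x_3$-letter in $\psi(w_i)$, and these loops sit at the cosets $i$ and $i+1$, where they register as $+u_i$ and $-u_{i+1}$ under the rewriting. Since $u_0, u_1, \ldots, u_{p-1}$ are part of a free basis for $H^{ab}$, the element $u_0 - u_1$ is nonzero, so $\psi^n(w_0) = w_0 + n(u_0 - u_1) \neq w_0$ for every nonzero integer $n$, and $\psi$ has infinite order on $H^{ab}$. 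The main technical obstacle is simply the sign bookkeeping in the Reidemeister--Schreier walks, especially at the wrap-around index $i = p-1$, which is routine but requires care.
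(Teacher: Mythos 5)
Your approach is correct, and the claimed output of the Reidemeister--Schreier computation checks out: walking $\psi(w_i) = x_1^{-1}x_2^i x_1 x_3 x_1^{-1}x_2^{-(i+1)}x_1$ through the coset graph of $\bZ/p\bZ$ does give $\psi(w_i) = u_0^{-1}u_i\, w_i\, u_{i+1}^{-1}u_0$ for $i<p-1$, and the wrap-around case $\psi(w_{p-1}) = u_0^{-1}u_{p-1}w_{p-1}$ is consistent with your formula $w_i + u_i - u_{(i+1)\bmod p}$ in $H^{\mathrm{ab}}$; the $a$, $u_i$, and $v_{k,i}$ are all fixed modulo commutators. Since $u_0 - u_1$ is part of a free basis of $H^{\mathrm{ab}}$, the orbit of $w_0$ is infinite, and the argument closes.

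Your route is genuinely different from the paper's. The paper takes $H$ to be the kernel of $F_n \to (\bZ/m\bZ)^2$, $x_2\mapsto(1,0)$, $x_3\mapsto(0,1)$ with $m\geq 3$, tracks the single element $[x_2,x_3]\in H$, and writes out its image under $\psi$ as a $\bZ[(\bZ/m\bZ)^2]$-linear combination of translates of $[x_1]$; it then asserts that for $m$ large enough the correction term is nonzero. You instead use the smaller cyclic cover $\bZ/p\bZ$ (any prime $p$, with no lower bound needed), extract an explicit Schreier basis, and read off the full unipotent structure of $\psi$ on $H^{\mathrm{ab}}$ with a visible Jordan block at $1$. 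The tradeoff is that the paper's version is shorter to state but leaves the nonvanishing of the correction term slightly implicit, whereas yours requires the (routine but bookkeeping-heavy) Schreier walk and in exchange gives a completely explicit nontrivial element $u_0 - u_1$ and makes the ``infinite order but all eigenvalues roots of unity'' phenomenon transparent. One would still want to actually carry out the walk rather than merely describe it, but as verified above the claimed formulas are accurate.
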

\begin{proof}
Consider the map $F_n\to(\bZ/m\bZ)^2$, where $m\geq 3$, given by $x_2\mapsto (1,0)$ and $x_3\mapsto (0,1)$.  Write $H$ for the kernel of this map.  Since $F_n/H$ is abelian, $\psi$ restricts to an automorphism of $H$.  Consider the homology class of the element $[x_2,x_3]\in H$.  Under the action of $\psi$, we have \[[x_2,x_3]\mapsto [x_2^{x_1},x_3].\]  We still have that $x_1$ and all of its conjugates under $(\bZ/m\bZ)^2$ are still nontrivial homology classes.  We can compute the homology class of $\psi([x_2,x_3])$ easily: it is \[-[x_1]+x_2\cdot [x_1]-(x_3x_2)\cdot [x_1]+(x_2^{-1}x_3x_2)\cdot [x_1]+[[x_2,x_3]].\]  Since $F_n/H$ is abelian, this simplifies to \[-[x_1]+x_2\cdot[x_1]-(x_3x_2)\cdot [x_1]+x_3\cdot [x_1]+[[x_2,x_3]].\]  Note that $\psi$ acts trivially on $[x_1]$ and on the deck group.  It follows that if the homology class \[-[x_1]+x_2\cdot[x_1]-(x_3x_2)\cdot [x_1]+x_3\cdot [x_1]\] is nontrivial then $\psi$ has infinite order.  Arranging $m$ sufficiently large guarantees this.
\end{proof}
Note that if $\psi$ is a partial conjugacy of $F_n$ then some power of $\psi$ must act unipotently on the homology of every finite index subgroup of $F_n$.  This follows from the fact that the characteristic polynomial of the action of $\psi$ on the homology of any finite index subgroup of $F_n$ is integral and the fact that the spectral radius of the action gives a lower bound for the entropy of $\psi$ as an automorphism of $F_n$, which is $1$.

In surface mapping class groups, the analogues of partial conjugacies are point--pushing maps.  We will consider point pushing maps in punctured mapping class groups.  Let $\Sigma$ be a punctured surface and let $\gamma$ be an essential, simple closed curve.  Let $T$ be the Dehn twist about $\gamma$, and let $S$ be a twist about a parallel copy $\gamma'$ of $\gamma$ which passes on the other side of the puncture.  Thus, $\Sigma\setminus \gamma\cup\gamma'$ has a component which is a punctured annulus.  The point pushing map of the puncture about $\gamma$ is given by $TS^{-1}$.

\begin{prop}
Let $\gamma$ be an essential, simple closed curve on a surface $\Sigma$, and let $P$ be the point pushing map about $\gamma$.  Then there is a finite cover $\Sigma'\to\Sigma$ to which $P$ lifts and acts with infinite order on $H_1(\Sigma',\bZ)$.
\end{prop}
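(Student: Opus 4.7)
The plan is to reduce the statement to the free-group partial-conjugation argument of the previous proposition. By the Birman exact sequence
\[ 1 \to \pi_1(\Sigma \setminus \{p\}) \to \Mod(\Sigma,p) \to \Mod(\Sigma) \to 1, \]
the point pushing map $P=TS^{-1}$ is the image of $\gamma$ under the Birman inclusion. Viewed as an automorphism of $\pi_1(\Sigma\setminus\{p\})$ with basepoint chosen off of $\gamma\cup\gamma'$, this mapping class fixes every loop supported in one component of the complement of $\gamma\cup\gamma'$ and conjugates loops supported in the other component by $\gamma$. This is exactly the shape of the partial conjugation $(x_1,\ldots,x_n)\mapsto(x_1,x_2^{x_1},x_3,\ldots,x_n)$ treated above.

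First I would pick a generating set for $\pi_1(\Sigma\setminus\{p\})$ adapted to $\gamma$, with a distinguished generator $x$ supported in the region fixed by $P$ and a generator $y$ supported in the other region, while taking $z=\gamma$. Next I would build a finite abelian cover $\Sigma'\to\Sigma$ corresponding to a surjection $\pi_1(\Sigma)\to(\bZ/m\bZ)^k$ in which the images of $[x]$, $[y]$ and $[z]$ are independent and $m$ is large. Since $P$ is in the image of the Birman map it acts trivially on $H_1(\Sigma,\bZ)$, so the cover $\Sigma'$ is $P$-invariant and $P$ lifts to an automorphism of the finite index subgroup $H\subset\pi_1(\Sigma\setminus\{p\})$ corresponding to $\Sigma'$.

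Next I would repeat the commutator computation from the partial-conjugation proposition, working inside $H^{ab}=H_1(\Sigma',\bZ)$. From $P([x,y])=[x,zyz^{-1}]$ one obtains, modulo $[H,H]$, an identity of the shape
\[ P([x,y])-[x,y] \equiv ([z]-1)\cdot\big(\text{fixed integral combination of deck translates of }[x],[y]\big), \]
which is nonzero in $H_1(\Sigma',\bZ)$ provided $[z]$ has sufficiently large order in the deck group $(\bZ/m\bZ)^k$. Iterating and examining $P^n([x,y])$ produces a sequence of pairwise distinct homology classes, giving the desired infinite order action.

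The main obstacle is ensuring that the surface relation does not interfere with the witnessing homology class when $\Sigma$ is closed. Because the computation involves only generators supported in a bounded region adjacent to $\gamma$, and because the cover is chosen large in the direction of $[\gamma]$, the contribution of the surface relation lies in a different eigenspace of the deck group and can be separated from the nontrivial term above. This is essentially the same verification used at the end of the proof of the previous proposition and transfers with only cosmetic modifications.
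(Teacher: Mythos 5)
Your proposal does not match the paper's argument, and it has gaps that would defeat the reduction to the free-group partial conjugation.

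The central problem is the description of how $P$ acts on $\pi_1$. Write $P=T_\gamma T_{\gamma'}^{-1}$ and take a basepoint outside the punctured annulus $A$ cobounded by $\gamma$ and $\gamma'$. Loops supported away from $A$ are fixed, and a generator $y$ that crosses $A$ picks up a $\gamma$ from $T_\gamma$ and a $(\gamma')^{-1}$ from $T_{\gamma'}^{-1}$. The net modifier is $\gamma(\gamma')^{-1}$, which up to conjugation and inversion is a small loop about the puncture, \emph{not} $\gamma$. That loop is null-homologous, so it has trivial image in every abelian deck group; your plan of choosing $(\bZ/m\bZ)^k$ so that ``$[z]=[\gamma]$ has large order'' therefore aims at the wrong class. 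In addition, when $\gamma$ is nonseparating, $\Sigma\setminus(\gamma\cup\gamma')$ has exactly two components -- the punctured annulus $A$ and one connected remainder -- so the ``fix one side, conjugate the other by $\gamma$'' picture is unavailable: the side containing $y$ would have to be $A$ itself, whose $\pi_1$ is generated by $\gamma$ and the boundary loop, and then $[y]$ cannot be chosen independent of $[z]=[\gamma]$ as you require. You also never address the reduction to nonseparating $\gamma$, which is the paper's first step.

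The paper's proof is topological and uses the puncture and branching in an essential way. It first passes to a cyclic cover to make $\gamma$ nonseparating, then to a cover with at least two punctures $p_1,p_2$, and then to the double cover in which loops about $p_1,p_2$ map to $1\in\bZ/2\bZ$. Concretely this is the cover obtained by cutting two copies of $\Sigma$ along an arc $c$ from $p_1$ to $p_2$, and $c$ is chosen to meet $\gamma$ but not $\gamma'$. Thus the total preimage of $\gamma$ is connected while $\gamma'$ lifts to two disjoint curves, one per sheet. The lift of $P^2$ is a multitwist, and one exhibits a simple closed curve whose algebraic intersection with the preimage of $\gamma$ is nonzero while it avoids one lift of $\gamma'$; the resulting transvection has infinite order on $H_1$. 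None of this is a degree-shift computation, and your closing remark about ``the surface relation when $\Sigma$ is closed'' is misplaced: the surface is punctured, $\pi_1(\Sigma)$ is free and has no surface relation, but the boundary loop $\gamma(\gamma')^{-1}$ and the branching over the punctures are exactly what the paper exploits and what an unbranched abelian cover built only from $[x],[y],[\gamma]$ cannot see.
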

\begin{proof}
First, we may assume that $\gamma$ is nonseparating.  Indeed, if $\gamma$ is separating then there is a finite cyclic cover of $\Sigma$ where each lift of $\gamma$ is nonseparating (see \cite{K1} for a complete discussion).  The point--pushing map is given by a simultaneous twist about $\gamma$ and a parallel copy $\gamma'$ of $\gamma$ which passes on the other side of the puncture.  Under a cover, the point--pushing map lifts to a simultaneous point--pushing map about the lifts of $\gamma$.

The cyclic cover in the previous paragraph can be taken to be unbranched over the puncture on $\Sigma$.  We may assume that $\Sigma$ has at least two punctures by passing to a cover which is unbranched over the original puncture.  We may therefore restrict our attention to the following situation: we are given two nonseparating curves $\gamma$ and $\gamma'$ which are parallel other than being separated by a puncture $p_1$, and a second puncture $p_2$.  Let $x_1$ and $x_2$ be the homology classes of small loops about $x_1$ and $x_2$.  We obtain a further cover of $\Sigma$ by sending $x_1$ and $x_2$ to $1\in\bZ/2\bZ$ and killing all other homology classes.

This cover which branches over $p_1$ and $p_2$ can be obtained by cutting two copies of $\Sigma$ open along a simple path $c$ connecting $p_1$ and $p_2$ and gluing the two copies together.  By choosing $c$ carefully, we may assume that $c$ intersects only one of $\gamma$ and $\gamma'$, say $\gamma$.  The total preimage of $\gamma$ is a connected curve which enters both copies of $\Sigma$, and the total lift of $\gamma'$ is a union of two simple closed curves, each of which remains in one copy of $\Sigma$.  The square of the point pushing map lifts to a twist about the total preimage of $\gamma$ and a square of a simultaneous twist about the two lifts of $\gamma'$.

It is clear now that there are simple, nonseparating, closed curves on this new cover which intersect $\gamma$ with nontrivial algebraic intersection, but avoid at least one of the lifts of $\gamma'$.  It is easy to see now that the square of the point--pushing map about $\gamma$ lifts to this new cover and acts with infinite order on the homology.
\end{proof}

\end{document}